\documentclass{article}
\pagestyle{plain}
%
\usepackage{amssymb}
\usepackage[french,english]{babel}
\usepackage{mathrsfs}
\usepackage[colorlinks, urlcolor=blue, citecolor=black, linkcolor=black, breaklinks]{hyperref}
\usepackage{algorithm}
\usepackage{algpseudocode}

\algnewcommand{\LineComment}[1]{\State \(\triangleright\) #1}
\usepackage{mathtools}
\usepackage{etoolbox}
\usepackage{caption}
\usepackage{amsthm}
\usepackage{listings}
\usepackage{color} 
\definecolor{mygreen}{RGB}{28,172,0} 
\definecolor{mylilas}{RGB}{170,55,241}
\lstset{language=Matlab,%
    basicstyle=\footnotesize,
    breaklines=true,%
    keywordstyle=\color{red},%
    morekeywords={}, deletekeywords={}, keywordstyle={\color{black}},
    identifierstyle=\color{black},%
    stringstyle=\color{mylilas},
    commentstyle=\color{mygreen},%
    showstringspaces=false,
    numbers=left,%
    numberstyle={\tiny \color{black}},
    numbersep=9pt, 
    emph=[2]{for,end, if, while},emphstyle=[2]\color{red}, 
}
\newtheorem{theorem}{Theorem}[section]
\newtheorem{definition}{Definition}[section]

\newtheorem{proposition}{Proposition}[section]

\newtheorem{remark}{Remark}[section]

\setcounter{equation}{0}
\newcommand{\F}{{\mathbb F}}

\def\og{\leavevmode\raise.3ex\hbox{$\scriptscriptstyle\langle\!\langle$~}}
\def\fg{\leavevmode\raise.3ex\hbox{~$\!\scriptscriptstyle\,\rangle\!\rangle$}}
\usepackage{authblk}
\title{Optimization of the scalar complexity of Chudnovsky$^2$ multiplication algorithms in finite fields}

\author[1]{St\'{e}phane Ballet}
\author[1]{Alexis Bonnecaze}
\author[1]{Thanh-Hung Dang}

\affil[1]{Aix-Marseille Univ, CNRS, Centrale Marseille, I2M, Marseille, France}

\setcounter{Maxaffil}{0}


\begin{document}

\maketitle

\begin{abstract}
We propose several constructions for the original multiplication algorithm of D.V. and G.V. Chudnovsky 
in order to improve its scalar complexity. We highlight  the set of generic strategies who underlay 
the optimization of the scalar complexity, according to parameterizable criteria. As an example, 
we apply this analysis to the construction of type elliptic Chudnovsky$^2$ multiplication algorithms 
for small extensions. As a case study, we significantly improve the Baum-Shokrollahi 
construction for multiplication in $\mathbb F_{256}/\mathbb F_4$. 
\end{abstract}

\section{Introduction}

\subsection{Context}


The construction  of efficient arithmetic operation algorithms is still a problem of topicality. 
These algorithms are indeed heavily used in many domains of computer sciences or information theory.  
 It is important to conceive and develop efficient arithmetic algorithms combined with an optimal implementation method. 
 In this work, our interest lies in multiplication algorithms in any extension of finite field introduced in 1987 by  D.V. and G.V Chudnovsky \cite{chch} 
 and based upon interpolation on some algebraic curves defined over finite fields. 
 Our goal is to improve this method so that its complexity in terms of number of operations is optimized. 
 
 \medskip

More precisely, the complexity of a multiplication algorithm in ${\mathbb F}_{q^n}$ depends on the number
of multiplications and additions in $\F_q$. But here, we are particularly interested by the multiplicative complexity 
of multiplication in a finite field $\F_{q^n}$, i.e. by the number of multiplications in $\F_q$ required to multiply in the 
$\F_q$-vector space $\F_{q^n}$ of dimension $n$. There exist two types of multiplications in $\mathbb F_q$: 
the scalar multiplication and the  bilinear  one. The scalar multiplication is the multiplication by a non-trivial constant 
(i.e. not equal to $0$ or $1$) in $\mathbb F_{q}$, which does not depend on the elements of $\mathbb F_{q^n}$
that are multiplied. The bilinear multiplication is a multiplication that depends on the elements of $\mathbb F_{q^n}$ 
that are multiplied. The bilinear complexity is independent of the  chosen representation of the finite field.

 \medskip


Let $q$ be a prime power, ${\mathbb F}_q$ the finite field with $q$ elements and ${\mathbb F}_{q^n}$ the degree $n$ extension of ${\mathbb F}_q$.
If $\mathcal{B}=\{e_1,...,e_n\}$ is a basis of ${\mathbb F}_{q^n}$ over $\mathbb F_q$ then for $x=\sum_{i=1}^{n}x_ie_i$ and $y=\sum_{j=1}^{n}y_je_j$, we have the product
\begin{equation}\label{calculdirect}
z=xy=\sum_{h=1}^{n}z_he_h=\sum_{h=1}^{n}\biggr( \sum_{i,j=1}^{n}t_{ijh}x_iy_j\biggl)e_h,
\end{equation}
 where $e_ie_j=\sum_{h=1}^{n}t_{ijh}e_h,$ $t_{ijh}\in \mathbb F_q$ being some constants.
 
\medskip

Then, we see that the direct calculation of $z=(z_1,...,z_n)$ using (\ref{calculdirect}) {\it a priori} requires $n^2$ non-scalar multiplications $x_iy_j$, $n^3$  scalar multiplications and $n^3-n$ additions.

\begin{definition}\label{defmus}
The total number of scalar multiplications in $\F_q$ used in an algorithm ${\mathcal U}_{q,n}$ of multiplication in $\F_{q^n}$ is
called scalar complexity of ${\mathcal U}_{q,n}$ and denoted $\mu_s({\mathcal U}_{q,n})$.
\end{definition}

Moreover, the multiplication of two elements of $\mathbb F_{q^n}$ is an $\mathbb F_q$-bilinear map from 
$\mathbb F_{q^n} \times \mathbb F_{q^n}$ onto $\mathbb F_{q^n}$. Then, it can be considered as an 
$\mathbb F_q$-linear map from the tensor product ${\mathbb F_{q^n} \otimes_{\F_q} \F_{q^n}}$onto 
$\mathbb F_{q^n}$. Therefore, it can also be  considered as an element $T$ of 
$({{\F_{q^n}})^\star \otimes_{\F_q} ({\F_{q^n}})^\star \otimes_{\mathbb F_q} \mathbb F_{q^n}}$, 
where $\F_{q^n}^\star$ denotes the dual of $\F_{q^n}$.

\medskip

Set
$$T=\sum_{i=1}^{r} x_i^\star\otimes y_i^\star\otimes c_i,$$
where $x_i^\star\in\F_{q^n}^\star$, $y_i^\star\in\F_{q^n}^\star$
and $c_i\in\F_{q^n}$.
The following holds for any ${x,y \in \F_{q^n}}$: $$x\cdot y=T(x\otimes y)=\sum_{i=1}^r x_i^\star(x) y_i^\star(y) c_i.$$

\begin{definition}\label{defmubmum}
A multiplication algorithm ${\mathcal U}_{q,n}$ in $\F_{q^n}$
is an expression
$$
x\cdot y=\sum_{i=1}^r x_i^\star(x) y_i^\star(y) c_i,
$$
where $x^\star_i,y^\star_i \in ({\F_{q^n}})^\star$, and $c_i\in \F_{q^n}$.

The number $r$ of summands in this expression is called the bilinear complexity of the algorithm ${\mathcal U}_{q,n}$
and is denoted by $\mu_b({\mathcal U}_{q,n})$. The multiplicative complexity of ${\mathcal U}_{q,n}$ is $\mu_{m}({\mathcal U}_{q,n})= \mu_b({\mathcal U}_{q,n})+\mu_s({\mathcal U}_{q,n})$.
\end{definition}

\begin{definition}
The minimal number of summands in a decomposition of the tensor $T$ of the multiplication in $\F_{q^n}$
is called the bilinear complexity of the multiplication in $\F_{q^n}$ and is denoted by
$\mu_b(q,n)$:
$$
\mu_b(q,n) = \min_{{\mathcal U}} \mu_b({\mathcal U}) 
$$
where ${\mathcal U}$ is running over all bilinear multiplication algorithms in $\F_{q^n}$ over $\F_q$.
\end{definition}

\subsection{Some known results}
Let us recall some known results useful for this study.
 In their seminal papers, Winograd \cite{wino3} and De Groote \cite{groo} have shown that $\mu_b(q,n)\geq 2n-1$, 
 with equality holding if and only if $n \leq \frac{1}{2}q + 1$. Winograd has also proved \cite{wino3} that optimal 
 multiplication algorithms realizing the lower bound belong to the class of interpolation algorithms. 
 Later, generalizing interpolation algorithms on the projective line over $\mathbb F_q$ to algebraic curves of 
 higher genus over $\mathbb F_q$, D.V. and G.V. Chudnovsky provided a method \cite{chch} which enabled 
 to prove the \textit{linearity} \cite{ball1} of the bilinear complexity of multiplication in finite extensions of a finite field. 
 This is the so-called Chudnovsky$^2$ multiplication algorithm (or CCMA). 
 Applying CCMA with fitted elliptic curves, Shokrollahi in \cite{shok} (for the upper strict inequality) 
 and Chaumine in \cite{chau1} have shown that if 
 
\begin{equation}\label{ine}
\frac{1}{2}q +1< n \leq \frac{1}{2}(q+1+{\epsilon (q) })
\end{equation} 
where $\epsilon$ is the function defined by:
$$
\epsilon (q)=
\left \{
\begin{array}{l}
 \mbox{the greatest integer } \leq 2{\sqrt q} \mbox{ prime to $q$, if  $q$ is not a perfect square} \\
 2{\sqrt q},\mbox{ if $q$ is a perfect square,}
\end{array}
\right .
$$ 
then the bilinear complexity $\mu_b(q,n)$ of 
the multiplication in the finite extension $\F_{q^n}$ of the
finite field $\F_q$ is equal to $2n$. 

Then, many studies  focused on the qualitative improvement of CCMA with respect to the bilinear complexity (cf. \cite{bachpirararo}). 
But the problem of the optimization of its scalar complexity  has never been studied, although it was first raised in 2015 by 
Atighehchi, Ballet, Bonnecaze and Rolland \cite{atbaboro2} and so far it remained an open problem 
(cf. \cite[Open problem 10.2]{bachpirararo}). More explicitly, the structure of the involved matrices in CCMA
should be examined more closely but unfortunately, there are no theoretical means or criteria today to build 
the best matrices because they depend on the geometry of the curves, the field of definition of these curves, 
as well as the  involved Riemann-Roch spaces. The remaining open question is how to choose the geometrical objects, 
the associated Riemann-Roch vector-spaces as well as the suitable representation of those in order to minimise 
the number of zeros and 1 in the matrices of the evaluation maps involved in CCMA.

\subsection{New results and organization}

This article is the complete and generalized study of a preliminary introduction on the subject 
of scalar complexity, initiated in \cite{baboda}. Its main goal is to identify the set 
of fundamental generic strategies underlying the scalar complexity optimization (known as scalar optimization) 
of CCMA and the relevant quantities related to it. To do so, after having recalled in detail the CCMA method (cf. Section 
\ref{sectionMethodCCMA}) as well as the contextual framework (initial configuration) in which we are going to stand, 
we perform a detailed analysis (cf. Section \ref{anacomp}) of the scalar complexity ($\mu_{s}$) 
and the underlying relevant related quantities ($\mu_{s,0}$ and $\mu_{s,1}$). 

Then, in Section \ref{DQfixed}, which is the core of the paper, we present the general results allowing to identify 
the main lever (degree of freedom) of CCMA scalar optimization for a given CCMA algorithm. 
Then, from these results, we give two main generic strategies (Propositions \ref{optimisationalgocanonique1} 
and \ref{optimisationalgocanonique2}), whose optimization criteria can be parameterized (Remark \ref{Remark2Setup2}), 
which are the cornerstone of the complete strategy (see Section \ref{strategiecomplete}).
At this level (cf. Section \ref{DQfixed}), we give in particular the explicit presentation of the various corresponding 
optimization setup algorithms and lower bounds of the quantities $\mu_{s,0}(\mathcal{U}^{A}_{D,Q,\mathcal{P}})$ 
and $\mu_{s,0}(\mathcal{U}^{F,n}_{D,Q,\mathcal{P}})$.
In the complete strategy, we then show that the scalar complexity of the CCMA algorithm is independent 
of the order of the rational places to be evaluated, for a given set of rational places.
Finally, as an example, we specialize our study to elliptic CCMA algorithms, illustrated by two new designs 
of the Baum-Shokrollahi construction for multiplication in $\mathbb F_{256}/\mathbb F_4$ based on the elliptic 
Fermat curve $x^3+y^3=1$. These two new constructions, obtained by applying strategies guided by the optimization 
criterion of the number of zeros in the matrices involved, have scalar complexities significantly better than that of Baum-Shokrollahi.


\section{The Chudnovsky$^2$ multiplication algorithm}

\subsection{Description and construction of CCMA}\label{sectionMethodCCMA}

Let $F/{\mathbb F}_q$ be an algebraic function field over the finite field ${\mathbb F}_q$ of genus $g(F)$. 
We denote by $N_k(F/{\mathbb F}_q)$ the number of places of degree $k$ of $F$ over ${\mathbb F}_q$. 
If $D$ is a divisor, ${\mathcal L}(D)$ denotes the Riemann-Roch space
associated to $D$. We denote by ${\mathcal O}_Q$ the valuation ring of the place $Q$ and by $F_Q $ 
its residue class field ${\mathcal O}_Q/Q$ which is isomorphic to $\F_{q^{\deg Q}}$ where $\deg Q$ 
is the degree of the place $Q$. The order of a divisor $D=\sum_{P}a_PP$ in the place $P$ is the number $a_P$, 
denoted $ord_P(D)$. The support of a divisor $D$ is the set $supp\hbox{ }D$ of the places $P$ such that $ord_P(D)\neq 0$. 
The divisor $D$ is called effective if $ord_P(D)\geq 0$ for any $P$. Let us define the classical Hadamard product $\odot$ in $\F_q^N$, 
where $N$ is a positive integer, by $(u_1,\ldots,u_{N}) \odot (v_1,\ldots,v_{N})=(u_1v_1,\ldots,u_{N}v_{N})$ for any $u_i, v_i$ in $\F_q$.
The following theorem describes the original multiplication algorithm of D.V. and G.V. Chudnovsky \cite{chch}.

\begin{theorem}\label{AlgoChud}
Let \begin{itemize}
      \item $n$ be a positive integer,
      \item $F /\F_{q}$ be an algebraic function field,
      \item $Q$ be a degree $n$ place of $F /\F_{q}$,
      \item $D$ be a divisor of $F /\F_{q}$,
      \item ${\mathcal P}=\{P_1,\ldots , P_N\}$ be an ordered set of places of degree one of $F /\F_{q}$.
   \end{itemize}

We suppose that $supp\;D \cap \{Q,P_1,...,P_N\}=\emptyset$ and that
\begin{enumerate}
\item[(i)] The evaluation map
$$\begin{array}{lccc}
 Ev_Q: & \mathcal L(D) & \rightarrow &   F_Q \\
    &  f              & \mapsto     & f(Q)
\end{array}$$
is surjective
\item[(ii)] The evaluation map
$$\begin{array}{lccl}
 Ev_{\mathcal P}: & \mathcal L(2D) & \rightarrow & \mathbb F_q^N \\
    &  f              & \mapsto     & \left(\strut f\left(P_1\right),\ldots,f\left(P_{N}\right)\right)
\end{array}$$
is injective
\end{enumerate}
Then
\begin{itemize}
\item [(1)]For any two elements $x$, $y$ in $\mathbb F_{q^n}$, we have a multiplication algorithm ${\mathcal U}_{q,n}$:

\begin{equation}\label{directproductalgoChud}
xy= E_Q \circ Ev_{\mathcal P}{|_{Im Ev_{\mathcal P}}}^{-1} \left( E_{\mathcal P}\circ Ev_Q^{-1}(x)\odot E_{\mathcal P}\circ Ev_Q^{-1}(y)  \right),
\end{equation}
where $E_Q$ denotes the canonical projection from the valuation ring ${\mathcal O}_Q$ of the place $Q$ in its residue class field $F_Q$, $E_{\mathcal P}$ the extension of $Ev_{\mathcal P}$ on the valuation ring ${\mathcal O}_Q$ of the place $Q$, $Ev_{\mathcal P}{|_{Im Ev_{\mathcal P}}}^{-1}$ the restriction of the inverse map of $Ev_{\mathcal P}$ on its image, and $\circ$ the standard composition map.

\item [(2)] We have: $$\mu_{b}({\mathcal U}_{q,n}) \leq N,$$ with equality if $N=dim\hbox{ } \mathcal L(2D)$.
\end{itemize}
\end{theorem}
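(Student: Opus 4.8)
The plan is to read formula (\ref{directproductalgoChud}) as a composition of ring homomorphisms and $\mathbb F_q$-linear maps, and to check step by step that this composition reproduces the product in $F_Q\cong\mathbb F_{q^n}$; the bilinear complexity then drops out by counting the componentwise products carried by the Hadamard factor. First I would fix the identification $\mathbb F_{q^n}\cong F_Q$ coming from $\deg Q=n$, recall that $E_Q\colon\mathcal O_Q\to F_Q$ is a ring homomorphism, and that each evaluation $f\mapsto f(P_i)$ (hence $E_{\mathcal P}$) is a ring homomorphism on the functions regular at the $P_i$. This is precisely where the hypothesis $supp\,D\cap\{Q,P_1,\dots,P_N\}=\emptyset$ enters: it guarantees that every $f\in\mathcal L(2D)$ lies in $\mathcal O_Q$ and in each $\mathcal O_{P_i}$, so all evaluations at $Q$ and at the $P_i$ are defined. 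Using hypothesis (i), for $x,y\in\mathbb F_{q^n}$ I would choose $f,g\in\mathcal L(D)$ with $Ev_Q(f)=x$ and $Ev_Q(g)=y$, i.e. $f=Ev_Q^{-1}(x)$, $g=Ev_Q^{-1}(y)$ for some preimages.

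The core is then a single chain of equalities. Since $(f)\ge -D$ and $(g)\ge -D$, the product satisfies $(fg)\ge -2D$, so $fg\in\mathcal L(2D)$; this is the inclusion $\mathcal L(D)\cdot\mathcal L(D)\subseteq\mathcal L(2D)$. Multiplicativity of evaluation at the $P_i$ gives $E_{\mathcal P}(f)\odot E_{\mathcal P}(g)=\bigl(f(P_1)g(P_1),\dots,f(P_N)g(P_N)\bigr)=\bigl((fg)(P_1),\dots,(fg)(P_N)\bigr)=Ev_{\mathcal P}(fg)$. By hypothesis (ii), $Ev_{\mathcal P}$ is injective on $\mathcal L(2D)$, so $Ev_{\mathcal P}|_{\mathrm{Im}\,Ev_{\mathcal P}}^{-1}$ applied to this vector returns exactly $fg$; finally $E_Q(fg)=E_Q(f)E_Q(g)=xy$ by multiplicativity of $E_Q$ and the identification $F_Q\cong\mathbb F_{q^n}$. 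Hence (\ref{directproductalgoChud}) outputs $xy$. I would also observe that the output is independent of the chosen preimages, since the whole chain yields $xy$ for any $f,g\in\mathcal L(D)$ evaluating to $x,y$ at $Q$; this proves that $\mathcal U_{q,n}$ is a well-defined algorithm, establishing (1).

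For (2) I would linearise the construction: fix an $\mathbb F_q$-linear section $s$ of the surjection $Ev_Q$ and set $x_i^\star(x)=\bigl(s(x)\bigr)(P_i)$ and $y_i^\star(y)=\bigl(s(y)\bigr)(P_i)$, which are linear forms in $\mathbb F_{q^n}^\star$. Extending the linear map $E_Q\circ Ev_{\mathcal P}|_{\mathrm{Im}\,Ev_{\mathcal P}}^{-1}$ to an $\mathbb F_q$-linear map $\tilde\Phi\colon\mathbb F_q^N\to F_Q$ and putting $c_i=\tilde\Phi(e_i)$, the chain above rewrites as $xy=\sum_{i=1}^N x_i^\star(x)\,y_i^\star(y)\,c_i$, which is exactly the form of Definition \ref{defmubmum} with $r=N$; hence $\mu_b(\mathcal U_{q,n})\le N$. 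When $N=\dim\mathcal L(2D)$, injectivity of $Ev_{\mathcal P}$ forces it to be an isomorphism $\mathcal L(2D)\to\mathbb F_q^N$, so $\mathrm{Im}\,Ev_{\mathcal P}=\mathbb F_q^N$ and the reconstruction genuinely uses all $N$ coordinates of the Hadamard product; the $N$ bilinear products are then all required, giving $\mu_b(\mathcal U_{q,n})=N$.

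The main obstacle is not any single computation but pinning down the ring-homomorphism nature of the residue and evaluation maps together with the inclusion $\mathcal L(D)^2\subseteq\mathcal L(2D)$: this is exactly what makes the Hadamard product of the two evaluation vectors equal to $Ev_{\mathcal P}(fg)$, and what lets $E_Q$ turn the recovered function $fg$ back into the field product $xy$. Secondary care is needed to verify well-definedness with respect to the non-unique preimages under $Ev_Q^{-1}$ and to justify the linear extension $\tilde\Phi$ used to read off the bilinear decomposition.
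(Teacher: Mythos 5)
Your argument for part (1) is correct and essentially identical to the paper's own derivation, which appears in the discussion immediately following the theorem: lift $x,y$ through the surjection $Ev_Q$ to $f,g\in\mathcal L(D)$, observe that $h:=fg$ lies in $\mathcal L(2D)$ and that the Hadamard product of the two evaluation vectors equals $Ev_{\mathcal P}(h)$, recover $h$ by injectivity of $Ev_{\mathcal P}$, and apply $E_Q$ to get $xy=E_Q(h)$. The paper gives no explicit proof of part (2) (it is inherited from the original Chudnovsky--Chudnovsky paper), and your linearization via a section of $Ev_Q$ correctly supplies the bound $\mu_b(\mathcal U_{q,n})\le N$; your closing claim that all $N$ products are ``required'' when $N=\dim\mathcal L(2D)$ is informal, but no more so than the statement itself, since by Definition \ref{defmubmum} the decomposition you exhibit then has exactly $N$ summands, none of which can be discarded without destroying injectivity by dimension count.
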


Since $Q$ is a place of degree $n$, the residue class field $F_Q$ of place $Q$ is an extension of degree $n$ of $\mathbb F_{q}$ and it therefore can be identified to $\mathbb F_{q^n}$. Moreover, the evaluation map $Ev_Q$ being onto, one can associate the elements $x,y \in \mathbb F_{q^n}$ with elements of $\mathbb F_q$-vector space $\mathcal L(D)$, denoted respectively $f$ and $g$. We define $h:=fg$ by \begin{equation} \label{map h} (h(P_1),...,h(P_N))=
E_{\mathcal P}(f)\odot E_{\mathcal P}(g)=\left(f(P_1)g(P_1),...,f(P_N)g(P_N)\right). \end{equation}
We know that such an element $h$ belongs to $\mathcal L(2D)$ since the functions $f,g$ lie in $\mathcal L(D)$. Moreover, thanks to injectivity of $Ev_{\mathcal P}$, the function $h$ is in $\mathcal L(2D)$ and is uniquely determined by \eqref{map h}. We have
$$xy=Ev_Q(f)Ev_Q(g)=E_Q(h)$$

where $E_Q$ is the canonical projection from the valuation ring $\mathcal O_Q$ of the place Q in its residue class field $F_Q$, $Ev_Q$ is the restriction of $E_Q$ over the vector space $\mathcal L(D)$.

In order to make the study and the construction of this algorithm easier, we proceed in the following way. 
We choose a place $Q$ of degree $n$ and a divisor $D$ of degree $n+g-1$, such that $Ev_Q$ and 
$Ev_{\mathcal P}$ are isomorphisms. In this aim in \cite{ball1}, S. Ballet introduces simple numerical 
conditions on algebraic curves of an arbitrary genus $g$ giving a sufficient condition for the application 
of CCMA (existence of places of certain degree, of non-special divisors of degree $g-1$) generalizing the result of A. Shokrollahi
\cite{shok} for the elliptic curves. Let us recall this result:

\begin{theorem} \label{theoprinc}
Let $q$ be a prime power and let $n$ be an integer $>1$.
If there exists an algebraic function field $F/\F_q$ of genus $g$ satisfying the conditions

\begin{enumerate}
         \item  $N_n>0$
         (which is always the case if $2g+1 \leq q^{\frac{n-1}{2}}(q^{\frac{1}{2}}-1)$),
	\item $N_1 > 2n+2g-2$,
\end{enumerate}

then there exists a divisor $D$ of degree $n+g-1$ and a place $Q$ such that:

\begin{enumerate}
\item[(i)] The evaluation map
$$\begin{array}{lccc}
 Ev_Q: & \mathcal L(D) & \rightarrow & \frac {\mathcal O_Q}{Q}  \\
    &  f              & \mapsto     & f(Q)
\end{array}$$
is an isomorphism of vector spaces over $\F_q$.
\item[(ii)] There exist places $P_1$,...,$P_{N}$ such that the evaluation map
$$\begin{array}{lccl}
 Ev_{\mathcal P}: & \mathcal L(2D) & \rightarrow & \mathbb F_q^N \\
    &  f              & \mapsto     & \left(\strut f\left(P_1\right),\ldots,f\left(P_{N}\right)\right)
\end{array}$$
is an isomorphism of vector spaces over $\F_q$ with $N=2n+g-1$.
\end{enumerate}
\end{theorem}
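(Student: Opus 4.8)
The plan is to manufacture the pair $(D,Q)$ from the Riemann--Roch theorem together with one genuinely arithmetic input --- the existence of a non-special divisor of degree $g-1$ over $\F_q$ --- and then to read off both isomorphisms by pure dimension counting. Write $F_Q=\mathcal O_Q/Q\cong\F_{q^n}$, an $n$-dimensional $\F_q$-space, and fix a canonical divisor $K$. First I would use hypothesis (1), $N_n>0$, to fix a place $Q$ of degree $n$. Next comes the key ingredient: a non-special divisor $R$ of degree $g-1$, so that $\dim_{\F_q}\mathcal L(R)=\deg R-g+1+\dim\mathcal L(K-R)=0$. I would then take $D$ to be a divisor linearly equivalent to $R+Q$, chosen via the moving lemma (strong approximation) so that $\operatorname{supp} D$ is disjoint from $Q$ and from every place of degree one; then $\deg D=(g-1)+n=n+g-1$, the map $Ev_Q$ is defined by evaluation, and every $f\in\mathcal L(2D)$ is regular at each rational place.

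For part (i) I would identify $\ker Ev_Q=\mathcal L(D-Q)$. Since $D-Q\sim R$ and $\dim\mathcal L$ depends only on the divisor class, $\dim\mathcal L(D-Q)=\dim\mathcal L(R)=0$, so $Ev_Q$ is injective. Riemann's inequality gives $\dim\mathcal L(D)\ge\deg D-g+1=n=\dim F_Q$, whereas injectivity forces $\dim\mathcal L(D)\le\dim F_Q=n$; hence $\dim\mathcal L(D)=n$ and $Ev_Q$ is an isomorphism.

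For part (ii), note $\deg 2D=2n+2g-2>2g-2$, so $2D$ is non-special and $\dim\mathcal L(2D)=2n+2g-2-g+1=2n+g-1=N$. I would first evaluate $\mathcal L(2D)$ at all $N_1$ places of degree one simultaneously, which is legitimate because none of them lies in $\operatorname{supp} 2D$. If a nonzero $f\in\mathcal L(2D)$ vanished at all of them, then, since $(f)+2D$ is an effective divisor of degree $\deg 2D=2n+2g-2$, these $N_1$ rational places would all sit in its support, forcing $N_1\le 2n+2g-2$, contrary to hypothesis (2). Hence this global evaluation is injective with image an $N$-dimensional subspace of $\F_q^{N_1}$, and a subspace of dimension $N$ projects isomorphically onto a suitable set of $N$ coordinates. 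The $N$ places indexing those coordinates give $P_1,\dots,P_N$ for which $Ev_{\mathcal P}\colon\mathcal L(2D)\to\F_q^{N}$ is injective between spaces of equal dimension $N$, hence an isomorphism.

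The main obstacle is the arithmetic input itself: part (i) is \emph{equivalent} to exhibiting a degree-$(n+g-1)$ divisor $D$ with $D-Q$ non-special of degree $g-1$, so the existence of a non-special divisor of degree $g-1$ rational over $\F_q$ cannot be bypassed and is the real content secured by the hypotheses (this is exactly the ingredient recalled from \cite{ball1} just before the statement). The rest is bookkeeping: one must invoke the moving lemma precisely so that $\operatorname{supp} D$ avoids every rational place, which keeps all $N_1$ places of degree one available for evaluation; it is this that makes the zero count against $\deg 2D=2n+2g-2$ the binding constraint and renders the bound $N_1>2n+2g-2$ exactly sufficient rather than something larger.
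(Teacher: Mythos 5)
First, a point of reference: the paper itself states Theorem \ref{theoprinc} \emph{without} proof --- it is recalled from \cite{ball1} --- so your attempt has to be judged on its own merits. Most of it holds up: the identification $\ker Ev_Q=\mathcal{L}(D-Q)$, the Riemann--Roch dimension counts, the use of strong approximation to move the support of $D$ off $Q$ and off the rational places, and your argument for (ii) (evaluate $\mathcal{L}(2D)$ at all $N_1$ rational places, note that a nonzero function in the kernel would force $N_1\le\deg 2D=2n+2g-2$ against hypothesis (2), then extract $N$ coordinates by a rank argument) are all correct, and this is the standard route.

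The genuine gap is precisely the step you declined to prove: the existence of a non-special divisor $R$ of degree $g-1$ defined over $\F_q$. You call it ``the real content secured by the hypotheses'' and point to the sentence preceding the theorem, but that sentence merely summarizes what \cite{ball1} establishes --- namely this very theorem --- so invoking it is circular: as you yourself observe, part (i) is \emph{equivalent} to this existence statement, so a proof must derive it from conditions (1) and (2) rather than assume it. And it does not come for free: over $\F_2$ an elliptic curve with a single rational point has class number one, so every degree-zero divisor is principal and no non-special divisor of degree $g-1=0$ exists; some quantitative hypothesis is indispensable. Condition (2) does the job, but this has to be shown. For instance (the case $g=0$ being trivial): since $N_1>2n+2g-2\ge 2g+2$, a greedy base-point count --- at each step the rational places where the dimension of $\mathcal{L}(K-P_1-\cdots-P_i)$ fails to drop lie in the support of an effective divisor of degree at most $2g-2<N_1$ --- produces distinct rational places $P_1,\dots,P_g$ with $\dim\mathcal{L}(P_1+\cdots+P_g)=1$; then for any further rational place $P_0$ distinct from these, the divisor $R=P_1+\cdots+P_g-P_0$ has degree $g-1$ and $\mathcal{L}(R)=0$, because $\mathcal{L}(P_1+\cdots+P_g)$ consists of the constants and a nonzero constant does not vanish at $P_0$; hence $R$ is non-special. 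With such a lemma supplied, your argument becomes a complete proof.
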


\begin{remark}
First, note that in the elliptic case, the condition (2) is a large inequality  thanks to a result due to Chaumine \cite{chau1}.
Secondly, note also that the divisor $D$ is not necessarily effective.
\end{remark}

By this last remark, it is important to add the property of effectivity for the divisor $D$ in a perspective of implemention.
Indeed, it is easier to construct the algorithm CCMA with this assumption because in this case $\mathcal L(D)\subseteq \mathcal L(2D)$ and we can directly apply the evaluation map $Ev_{\mathcal P}$ instead of $E_{\mathcal P}$ in the algorithm (\ref{directproductalgoChud}), by means of a suitable representation of $\mathcal L(2D)$. Moreover, in this case we need to consider simultaneously the assumption that the support of the divisor $D$ does not contain the rational places and the place $Q$ of degree $n$ and the  assumption of effectivity of the divisor $D$. Indeed, it is known that the support moving technic (cf. \cite[Lemma 1.1.4.11]{piel}), which is a direct consequence of Strong Approximation Theorem (cf. \cite[Proof of Theorem I.6.4]{stic2}), applied on an effective divisor generates the loss of effectivity of the initial divisor (cf. also \cite[Remark 2.2]{atbaboro2}). So, let us suppose these two last assumptions.

\begin{remark}
As in \cite{ball2}, in practice, we take as a divisor $D$ one place of degree $n+g-1$. It has
the advantage to solve the problem of the support of divisor $D$ (cf. also \cite[Remark 2.2]{atbaboro2}) as well as the problem of the effectivity of the divisor D. However, it is not required to be considered in the theoretical study, but, as we will see, it will have some importance in the strategy of optimization.
\end{remark}

We can therefore consider the basis ${\mathcal B}_{Q}$ of the residue class field $F_Q$  over $\mathbb F_q$ as the image of a basis of $\mathcal L(D)$ by $Ev_Q$ or equivalently (which is sometimes useful following the considered situation) the basis of $\mathcal L(D)$ as the reciprocal image of a basis of the residue class field $F_Q$  over $\mathbb F_q$ by $Ev^{-1}_Q$. Let

\begin{equation} \label{BD}
{\mathcal B}_{D}:=\{f_1,...,f_n\}
\end{equation}

be a basis of $\mathcal L(D)$ and let us denote the basis of the supplementary space ${\mathcal M}$ of $\mathcal L(D)$ in $\mathcal L(2D)$ by

\begin{equation} \label{BCD}
{\mathcal B}^{c}_{D}:=\{f_{n+1},...,f_{N}\}
\end{equation}

where $N:= dim \mathcal L(2D)=2n+g-1$. Then, we choose

\begin{equation} \label{B2D}
{\mathcal B}_{2D}:={\mathcal B}_{D}\cup {\mathcal B}^{c}_{D}
\end{equation}

as the basis of $\mathcal L(2D)$.

We denote by $T_{2D}$ the matrix of the isomorphism $Ev_{\mathcal P}: \mathcal L(2D) \rightarrow \mathbb F_q^N$ in the basis ${\mathcal B}_{2D}$ of  $\mathcal L(2D)$ (the basis of $\F_q^N$ will always be the canonical basis). Then, we denote by $T_{D}$ the matrix of the first $n$ columns of the matrix $T_{2D}$. Therefore, $T_{D}$ is the matrix of the restriction of the  evaluation map $Ev_{\mathcal P}$ on the Riemann-Roch vector space $\mathcal L(D)$, which is an injective morphism.

Note that the canonical surjection $E_Q$ is the extension of the isomorphism $Ev_Q$ since, as $Q \notin supp(D)$, we have $\mathcal L(D)\subseteq \mathcal O_Q$. Moreover, as $supp(2D)= supp(D)$, we also have $\mathcal L(2D)\subseteq \mathcal O_Q$. We can therefore consider the images of elements of the basis ${\mathcal B}_{2D}$ by $E_Q$ and obtain a system of $N$ linear equations as follows: $$E_Q(f_r)=\sum_{m=1}^{n}c_r^mEv_Q(f_m), \;\;\;r=1,...,N$$
where $E_Q$ denotes the canonical projection from the valuation ring ${\mathcal O}_Q$ of the place $Q$ in its residue class field $F_Q$, $Ev_Q$ is the restriction of $E_Q$ over the vector space $\mathcal L(D)$ and $c_r^m \in \mathbb F_q$ for $r=1,...,N$.
Let $C$ be the matrix of the restriction of the map $E_Q$ on the Riemann-Roch vector space $\mathcal L(2D)$, from the basis ${\mathcal B}_{2D}$ in the basis ${\mathcal B}_{Q}$.
We obtain the product $z:=xy$ of two elements $x,y \in \mathbb F_{q^n}$ by the algorithm~(\ref{directproductalgoChud}) in Theorem \ref{AlgoChud},
where $M^t$ denotes the transposed matrix of the matrix $M$:

\begin{algorithm}[H]
\caption{Chudnovsky$^2$ Multiplication algorithm (CCMA) in $\mathbb F_{q^n}$}\label{algom}
\begin{algorithmic}
\Require
$x=\sum\limits_{i = 1}^n {x_iEv_Q(f_i)},$ and $y=\sum\limits_{i = 1}^n{y_iEv_Q(f_i)}$.\;\; 
\Ensure  $z=xy=\sum\limits_{i = 1}^n {z_iEv_Q(f_i)}$. \;\;

       \begin{enumerate}
         \item

         $X:=(X_1,...,X_{N})=(x_1,...,x_n)T_{D}^t=Ev_{\mathcal P}(x)$.

         $Y:=(Y_1,...,Y_{N})=(y_1,...,y_n)T_{D}^t=Ev_{\mathcal P}(y)$.

        \item $Z:=X\odot Y=(Z_1,...,Z_N)= (X_1Y_1,\ldots, X_{N}Y_{N})$.
        \item $(z_1,\ldots,z_n)=(Z_1,...,Z_N)(T_{2D}^t)^{-1}C^t=E_Q\circ Ev^{-1}_{\mathcal P}(Z)$.

       \end{enumerate}

\end{algorithmic}
\end{algorithm}
Now, we present an initial setup algorithm which is only done once.

\begin{algorithm}[H]
\caption{Setup algorithm of CCMA in $\mathbb F_{q^n}$}\label{algomsetup}
\begin{algorithmic}
\Require $F/{\mathbb F}_{q},~ Q,  D$, ${\mathcal P}=\{P_1,\ldots, P_{2n+g-1}\}$.
\Ensure  ${\mathcal B}_{2D}$, $T_{2D} \hbox{ and } CT_{2D}^{-1}.$
        \begin{enumerate}

          \item Check the function field $F/{\mathbb F}_{q}$, the place $Q$, the divisors $D$  are such that Conditions (i) and (ii) in Theorem \ref{theoprinc} can be satisfied.
          \item Represent $\F_{q^n}$  as the residue class field of the place $Q$.
          \item Construct a basis ${\mathcal B}_{2D}:=\{f_1, \ldots, f_n,f_{n+1}, \ldots, f_{2n+g-1}\}$ of $\mathcal L(2D)$, where ${\mathcal B}_{D}:=\{f_1, \ldots, f_n\}$
          is a basis of $\mathcal L(D)$, and ${\mathcal B}^{c}_{D}:=\{f_{n+1},...,f_{2n+g-1}\}$ a basis of the supplementary space
          ${\mathcal M}$ of $\mathcal L(D)$ in $\mathcal L(2D)$.
          \item Compute the matrices $T_{2D}$, $C$ and $CT_{2D}^{-1}$.
           \end{enumerate}
\end{algorithmic}
\end{algorithm}

\subsection{Complexity analysis}\label{anacomp}

Recall that the bilinear complexity of Chudnovsky$^2$ algorithms of type~(\ref{directproductalgoChud}) in 
Theorem \ref{AlgoChud} satisfying assumptions of Theorem \ref{theoprinc} is optimized. Therefore, we only focus 
on optimizing the scalar complexity of the algorithm. From Algorithm~\ref{algom} 
we observe that the number of scalar  multiplications depends directly on the number of zeros and 
of coefficients equal to $1$ in the matrices $T_{D}$ and $C.T^{-1}_{2D}$. Indeed, all the involved matrices 
being constructed once, the multiplication by a coefficient zero or $1$ in a matrix has not to be taken into account.
Let us give an algorithm $\mathcal{U}_{q,n}$ of type Algorithm \ref{algom} with a setup of type Algorithm \ref{algomsetup}. 
We can analyze the multiplicative complexity $\mu_m(\mathcal{U}_{q,n})$ of the algorithm $\mathcal{U}_{q,n}$, 
i.e. in terms of the total number of multiplications in $\mathbb F_q$, in the following way. We call $\mu_{s,0}(\mathcal{U}_{q,n})$ 
(resp. $\mu_{s,1}(\mathcal{U}_{q,n})$) the scalar complexity of the algorithm $\mathcal{U}_{q,n}$, 
taking into account uniquely the number of zeros $N_{z}(T_{D})$ (resp. the number of ones $N_{1}(T_{D})$) 
and $N_{z}(C.T^{-1}_{2D})$ (resp. $N_{1}(C.T^{-1}_{2D})$) respectively in the matrices $T_{D}$ and $CT_{2D}^{-1}$. Consequently, we clearly have 
\begin{equation}
\mu_{s}(\mathcal{U}_{q,n})\leq \mu_{s,0}(\mathcal{U}_{q,n}) 
\end{equation}

\begin{equation}
\mu_{s}(\mathcal{U}_{q,n})\leq \mu_{s,1}(\mathcal{U}_{q,n})
\end{equation}
and so 

\begin{equation}
\mu_{m}(\mathcal{U}_{q,n})\leq \mu_{s,0}(\mathcal{U}_{q,n})+ \mu_{b}(\mathcal{U}_{q,n}) 
\end{equation}

\begin{equation}
\mu_{m}(\mathcal{U}_{q,n})\leq \mu_{s,1}(\mathcal{U}_{q,n})+ \mu_{b}(\mathcal{U}_{q,n})
\end{equation}

by Definition \ref{defmubmum}.


\medskip

The multiplicative complexity of the algorithm $\mathcal{U}_{q,n}$ is equal to $$\mu_{m}(\mathcal{U}_{q,n})=(3n+1)(2n+g-1),$$ 
including $$\mu_{s}(\mathcal{U}_{q,n})=3n(2n+g-1)$$ scalar multiplications in the least case.
More precisely, we get the formula to compute the number of scalar multiplications of this algorithm with 
respect to the number of zeros and $1$ of the involved matrices as follows:
$$\mu_s(\mathcal{U}_{q,n})= 2\biggr(n(2n+g-1)-N_{z}(T_{D})-N_1(T_{D})\biggl) + $$
\begin{equation} \label{scalar}
\biggr(n(2n+g-1)-N_{z}(C.T^{-1}_{2D}) -N_1(C.T^{-1}_{2D})\biggl)= 3n(2n+g-1)-N_{z}-N_1,
\end{equation}

where
\begin{equation} \label{Nzscalar}
N_{z}=2N_{z}(T_{D})+N_{z}(C.T^{-1}_{2D})
\end{equation}

and 

\begin{equation} \label{N1scalar}
N_{1}=2N_{1}(T_{D})+N_{1}(C.T^{-1}_{2D}).
\end{equation}

Moreover, we see in Algorithm \ref{algom} that all the scalar multiplications come from steps 1 and 3. 
Thus, for the analysis of the scalar complexity of any algorithm $\mathcal{U}_{q,n}$, we will distinguish 
the scalar complexities of steps 1 and 3 (resp. denoted $\mathcal{U}_{A}$ and $\mathcal{U}_{R}$) 
by respectively $\mu_s(\mathcal{U}_{A})$ and $\mu_s(\mathcal{U}_{R})$ which are by Formula (\ref{scalar}):

\begin{equation}\label{As}
\mu_s(\mathcal{U}_{A})=2\biggr(n(2n+g-1)-N_{z}(T_{D})-N_1(T_{D})\biggl)
\end{equation}
and 

 \begin{equation}\label{Rs}
 \mu_s(\mathcal{U}_{R})=\biggr(n(2n+g-1)-N_{z}(C.T^{-1}_{2D}) -N_1(C.T^{-1}_{2D})\biggl).
 \end{equation}
 
 We also will distinguish the scalar complexity of these steps of the algorithm, taking only into account 
 the number of zeros (resp. the number of 1). 
 Note that if we take into account the number of zeros (resp. the number of 1) in the step $\mathcal{U}_{A}$, then we take into 
 account the number of zeros (resp. the number of 1) in the step $\mathcal{U}_{R}$.
 Thus, we call $\mu_{s,0}(\mathcal{U}_{A})$ (resp. $\mu_{s,1}(\mathcal{U}_{A})$) and $\mu_{s,0}(\mathcal{U}_{R})$ (resp. $\mu_{s,1}(\mathcal{U}_{R})$) the quantities:
 
 \begin{equation}\label{Az} 
 \mu_{s,0}(\mathcal{U}_{A})= \mu_s(\mathcal{U}_{A}) \hbox{ with } N_1(T_D)=0 
 \end{equation}
 \begin{equation} \label{A1}
 \mu_{s,1}(\mathcal{U}_{A})= \mu_s(\mathcal{U}_{A}) \hbox{ with } N_z(T_D)=0 
 \end{equation}
 
 and 
 
 \begin{equation} 
 \mu_{s,0}(\mathcal{U}_{R})= \mu_s(\mathcal{U}_{R}) \hbox{ with } N_1(C.T^{-1}_{2D})=0,
 \end{equation}
 \begin{equation} 
 \mu_{s,1}(\mathcal{U}_{R})= \mu_s(\mathcal{U}_{R}) \hbox{ with } N_z(C.T^{-1}_{2D})=0.
 \end{equation} 
 Thus, we have: 
 
 \begin{equation}\label{ARz}
 \mu_{s,0}(\mathcal{U}_{q,n})= \mu_{s,0}(\mathcal{U}_{A})+\mu_{s,0}(\mathcal{U}_{R})=3n(2n+g-1)-N_z,
 \end{equation}
 and
 \begin{equation}\label{AR1}
 \mu_{s,1}(\mathcal{U}_{q,n})= \mu_{s,1}(\mathcal{U}_{A})+\mu_{s,1}(\mathcal{U}_{R})=3n(2n+g-1)-N_1.
 \end{equation}
 
 \begin{remark}\label{Addition1versus0}
 For the scalar complexity (i.e. the number of scalar multiplications), the coefficients $1$ and $0$ 
 play a symmetrical role. However, if we are looking at the additions, this role is no longer symmetrical 
 because the coefficients $1$ present in the matrices increase the number of additions 
 in the multiplication algorithm. Thus, from this point of view, it is in every interest to 
 favor the maximization of the number of zeros. It is for this reason in particular that this article 
 will give priority to the study of $\mu_{s,0}(\mathcal{U}_{q,n})$.
 \end{remark}
 
\section{Optimization of the scalar complexity} \label{optimisationscalar}

In this paper, we mainly focus on the optimization of the quantity  $\mu_{s,0}(\mathcal{U}_{q,n})$ 
introduced in Section \ref{anacomp}. In this sense, reducing the number of operations means finding 
an algebraic function field $F/\F_q$ having a genus $g$ as small as possible and a suitable set of 
divisor and places $(D,Q,\mathcal{P})$ with a good representation of the associated Riemann-Roch spaces, 
namely such that the matrices $T_{D}$ and $C.T^{-1}_{2D}$ are as hollow as possible 
(i.e. with a maximal number of zeros). Therefore, for a place $Q$ and a suitable divisor $D$, 
we seek the best possible representations of Riemann-Roch spaces $\mathcal L (D)$ and $\mathcal L (2D)$ 
to maximize mainly both parameters $N_{z}(T_{D})$ and $N_{z}(C.T^{-1}_{2D})$.

\subsection{Different types of generic strategy}\label{sectionDTGS}

\subsubsection{With fixed divisor and places}\label{DQfixed}

In this section, we consider the optimization of any algorithm $\mathcal{U}_{q,n}$ for a fixed suitable set 
of divisor and places $(D,Q,\mathcal{P})$ for a given algebraic function field $F/\F_q$ of genus $g$. 
Hence, according to Section \ref{anacomp}, we will denote here more precisely the algorithm $\mathcal{U}_{q,n}$ 
as well as the associated quantities $\mathcal{U}_{A}$ and $\mathcal{U}_{R}$ thanks to the following definition:


\begin{definition}\label{udq}
We call $\mathcal{U}^{F,n}_{D,Q,\mathcal{P}}:=(\mathcal{U}^{A}_{D,Q,\mathcal{P}},\mathcal{U}^{R}_{D,Q,\mathcal{P}})$ 
a Chudnovsky$^2$ multiplication algorithm of type (\ref{directproductalgoChud}) where 
$\mathcal{U}^{A}_{D,Q,\mathcal{P}}:=E_{\mathcal P}\circ Ev_Q^{-1}$ and 
$\mathcal{U}^{R}_{D,Q,\mathcal{P}}:=E_Q \circ Ev_{\mathcal P}{|_{Im Ev_{\mathcal P}}}^{-1}$, 
satisfying the assumptions of Theorem \ref{AlgoChud}. We will say that two algorithms are equal, 
and we will note: $\mathcal{U}^{F,n}_{D,Q,\mathcal{P}}=\mathcal{U}^{F,n}_{D',Q',\mathcal{P'}}$, 
if  $\mathcal{U}^{A}_{D,Q,\mathcal{P}}=\mathcal{U}^{A}_{D',Q',\mathcal{P}'}$ and 
$\mathcal{U}^{R}_{D,Q,\mathcal{P}}=\mathcal{U}^{R}_{D',Q',\mathcal{P}'}$.

\end{definition}

Note that in this case, this definition makes sense only if the bases of implied vector-spaces are fixed. 
So, we denote respectively by ${\mathcal B}_{Q}$, ${\mathcal B}_{D}$,
and ${\mathcal B}_{2D}$ the basis of the residue class field $F_Q$, and of Riemann-Roch vector-spaces 
${\mathcal L}(D)$, and ${\mathcal L}(2D)$ associated to $\mathcal{U}^{F,n}_{D,Q,\mathcal{P}}$.  
Note that the basis of the $\F_q$-vector space  $\F^N_q$ is the canonical basis, up to permutation. 
Then, we obtain the following result:

\begin{proposition}\label{algocanonique}
Let us consider an algorithm $\mathcal{U}^{F,n}_{D,Q,\mathcal{P}}$ such that the divisor $D$ 
is an effective divisor, $D-Q$ a non-special divisor of degree $g-1$, and such that the cardinal of the set
$\mathcal{P}$ is equal to the dimension of the Riemann-Roch space $\mathcal L (2D)$. 
Then we can choose the basis ${\mathcal B}_{2D}$ as (\ref{B2D}) and for any $\sigma$ in $GL_{\F_q}(2n+g-1)$, 
where $GL_{\F_q}(2n+g-1)$ denotes the linear group, we have
$$\mathcal{U}^{F,n}_{\sigma(D),Q,\mathcal{P}}=\mathcal{U}^{F,n}_{D,Q,\mathcal{P}}$$ 
where $\sigma(D)$ denotes the action of $\sigma$ on the basis ${\mathcal B}_{2D}$ of $\mathcal L (2D)$ 
in $\mathcal{U}^{F,n}_{D,Q,\mathcal{P}}$, with a fixed basis ${\mathcal B}_{Q}$ of the residue class field 
of the place $Q$ and ${\mathcal B}_{c}$ the canonical basis of $\F_q^{2n+g-1}$. In particular, the quantities 
$N_{z}(C.T^{-1}_{2D})$ and $N_{1}(C.T^{-1}_{2D})$ are constant under this action.

\end{proposition}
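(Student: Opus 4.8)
The plan is to observe that the two components $\mathcal{U}^{A}_{D,Q,\mathcal{P}}=E_{\mathcal P}\circ Ev_Q^{-1}$ and $\mathcal{U}^{R}_{D,Q,\mathcal{P}}=E_Q\circ Ev_{\mathcal P}|_{Im Ev_{\mathcal P}}^{-1}$ of the algorithm are compositions of maps that are intrinsic to the datum $(F,D,Q,\mathcal P)$, so that replacing $\mathcal{B}_{2D}$ by $\sigma(\mathcal{B}_{2D})$ changes only the coordinate matrices $T_{2D}$ and $C$ representing the evaluation maps, not the maps themselves. The invariance of the algorithm, and of the matrix $C.T^{-1}_{2D}$ in particular, then follows from tracking how these matrices transform and checking that the transformation cancels in the relevant product.

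First I would verify that the hypotheses place us in the setting of Theorem \ref{theoprinc}, so that all the inverse maps in Definition \ref{udq} exist. Since $D-Q$ is non-special of degree $g-1$ we have $\dim \mathcal L(D-Q)=0$, and as $\ker Ev_Q=\mathcal L(D-Q)$ the map $Ev_Q$ is injective; combining $\ker Ev_Q=0$ with the Riemann--Roch lower bound $\dim\mathcal L(D)\ge n$ forces $\dim\mathcal L(D)=n$ and makes $Ev_Q$ an isomorphism. Likewise $\deg(K-2D)<0$ gives $\dim\mathcal L(2D)=2n+g-1$, and since the cardinality of $\mathcal P$ equals this dimension and $Ev_{\mathcal P}$ is injective by the assumptions of Theorem \ref{AlgoChud}, $Ev_{\mathcal P}$ is an isomorphism and $T_{2D}$ is invertible. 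Effectivity of $D$ gives $\mathcal L(D)\subseteq\mathcal L(2D)\subseteq\mathcal O_Q$, so that the basis choice (\ref{B2D}) is available and $Ev_{\mathcal P}$ may be used in place of $E_{\mathcal P}$.

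Next I would make the action of $\sigma$ explicit. Writing the new basis vectors as $f'_j=\sum_k\sigma_{kj}f_k$ and evaluating $Ev_{\mathcal P}$ and $E_Q$ on them while keeping the canonical basis $\mathcal{B}_c$ of $\F_q^{2n+g-1}$ and the basis $\mathcal{B}_Q$ of $F_Q$ fixed, I get column by column that $T_{2D}\mapsto T_{2D}\sigma$ and $C\mapsto C\sigma$. Since $\mathcal{U}^{R}_{D,Q,\mathcal{P}}$ is represented in these fixed bases by $C.T^{-1}_{2D}$, the central computation
$$(C\sigma)(T_{2D}\sigma)^{-1}=C\sigma\sigma^{-1}T^{-1}_{2D}=C.T^{-1}_{2D}$$
shows that this matrix is literally unchanged. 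This gives at once $\mathcal{U}^{R}_{\sigma(D),Q,\mathcal{P}}=\mathcal{U}^{R}_{D,Q,\mathcal{P}}$ and the constancy of $N_{z}(C.T^{-1}_{2D})$ and $N_{1}(C.T^{-1}_{2D})$, which is the \emph{in particular} assertion.

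It then remains to handle the $A$-part and to address the one genuinely delicate point. The map $\mathcal{U}^{A}_{D,Q,\mathcal{P}}=E_{\mathcal P}\circ Ev_Q^{-1}$ from $F_Q$ to $\F_q^N$ is built only from $Ev_Q^{-1}$, whose image is the intrinsic space $\mathcal L(D)$, and from evaluation at the places of $\mathcal P$; its source and target bases $\mathcal{B}_Q$ and $\mathcal{B}_c$ are fixed, and the basis $\mathcal{B}_{2D}$ does not enter its definition, so $\mathcal{U}^{A}_{\sigma(D),Q,\mathcal{P}}=\mathcal{U}^{A}_{D,Q,\mathcal{P}}$; together with the previous paragraph and Definition \ref{udq} this yields $\mathcal{U}^{F,n}_{\sigma(D),Q,\mathcal{P}}=\mathcal{U}^{F,n}_{D,Q,\mathcal{P}}$. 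The hard part will be to present this $A$-part argument rigorously: a general $\sigma$ mixes the basis $\mathcal{B}_D$ of $\mathcal L(D)$ with the complementary basis $\mathcal{B}^{c}_D$ of $\mathcal M$, so the naive matrix obtained as the first $n$ columns of $T_{2D}\sigma$ does change, and I must explain that this matrix is no longer the coordinate representation of $\mathcal{U}^{A}$ in the fixed bases, whereas the true representation of $\mathcal{U}^{A}$ is unaffected precisely because $Ev_Q^{-1}$ always lands in $\mathcal L(D)$ regardless of the presentation of $\mathcal{B}_{2D}$. Distinguishing the intrinsic map from its coordinate avatar, rather than invoking a blanket ``everything is intrinsic'' slogan, is where the care is needed.
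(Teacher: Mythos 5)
Your proof is correct and follows essentially the same route as the paper: the paper's proof rests on the single observation that the matrix of a composition $h\circ f$ in fixed source and target bases is independent of the basis chosen for the intermediate space, applied to $(F_Q,\mathcal L(D),\F_q^{2n+g-1})$ for $\mathcal{U}^{A}$ and to $(\F_q^{2n+g-1},\mathcal L(2D),F_Q)$ for $\mathcal{U}^{R}$. Your explicit cancellation $(C\sigma)(T_{2D}\sigma)^{-1}=C.T^{-1}_{2D}$ and your intrinsic-map argument for the $A$-part are just the concrete instantiations of that same lemma, so no further comparison is needed.
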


\begin{proof}
Let $E$, $F$ and $H$ be three vector spaces of finite dimension on a field $K$respectively equipped with the basis ${\mathcal B}_{E}$, ${\mathcal B}_{F}$ and ${\mathcal B}_{H}$.
Consider two morphisms $f$ and $h$ respectively defined from $E$ into $F$ and from $F$ into $H$ and consider respectively their associated matrix $M_f({\mathcal B}_{E},{\mathcal B}_{F})$ and $M_h({\mathcal B}_{F},{\mathcal B}_{H})$. Then it is obvious that the matrix $M_{h\circ f}({\mathcal B}_{E},{\mathcal B}_{H})$ of the morphism $h\circ f$ is independant from the choice of the basis ${\mathcal B}_{F}$ of $F$. As the divisor $D$ is effective, we have ${\mathcal L(D)}\subset {\mathcal L(2D)}$ and then $\mathcal{U}^{A}_{D,Q,\mathcal{P}}:=E_{\mathcal P}\circ Ev_Q^{-1}=Ev_{\mathcal P}\circ Ev_Q^{-1}$ and as $D-Q$ a non-special divisor of degree $g-1$, $Ev_Q$ is an isomorphism from $\mathcal L(D)$ into $F_Q$ and we have $\mathcal{U}^{A}_{D,Q,\mathcal{P}}=Ev_{\mathcal P}{|_{\mathcal L(D)}}\circ Ev_Q^{-1}$.
Moreover, as the cardinal of the set $\mathcal{P}$ is equal to the dimension of the Riemann-Roch space $\mathcal L (2D)$, $Ev_{\mathcal P}$ is an isomorphism from ${\mathcal L(2D)}$ into $\F_q^{2n+g-1}$ equipped with the canonical basis ${\mathcal B}_{c}$. Thus, $\mathcal{U}^{R}_{D,Q,\mathcal{P}}:=E_Q \circ Ev^{-1}_{\mathcal P}{|_{Im Ev_{\mathcal P}}}=E_{Q}|_{{\mathcal L(2D)}} \circ Ev^{-1}_{\mathcal P}$. Then, the matrix of $\mathcal{U}^{A}_{D,Q,\mathcal{P}}$ (resp. $\mathcal{U}^{R}_{D,Q,\mathcal{P}}$) is invariant under the action of $\sigma$ in $GL_{\F_q}(n)$ (resp. in $GL_{\F_q}(2n+g-1)$) on the basis ${\mathcal B}_{D}$ (resp. ${\mathcal B}_{2D}$) since the set $(E,F,H)$ is equal to $(F_Q,{\mathcal L(D)}, \F_q^{2n+g-1})$ (resp. $(\F_q^{2n+g-1},{\mathcal L(2D)}, F_{Q})$) for $h\circ f:=Ev_{\mathcal P}{|_{\mathcal L(D)}}\circ Ev_Q^{-1}$ (resp. $E_{Q}|_{{\mathcal L(2D)}} \circ Ev^{-1}_{\mathcal P}$).
\end{proof}

\medskip

Apart from the fact that this result provides a generic construction strategy 
of Chudnovsky's algorithm leading to significantly improve (and even optimize)  
the scalar complexity of this algorithm, this result also highlights a preferential configuration. 
Indeed, since the quantities $N_{z}(C.T^{-1}_{2D})$ and $N_{1}(C.T^{-1}_{2D})$ 
are constant under the action of the linear group, one has the choice, without consequence upon the scalar complexity, 
of the basis of the supplement ${\mathcal L}(D)$ in ${\mathcal L}(2D)$. Also, we favor 
a kernel-type configuration which not only has the particularity of having no negative impact on the 
global scalar complexity of the algorithm $\mathcal{U}=(\mathcal{U}_{A},\mathcal{U}_{R})$ 
but also to simplify the scalar optimization process of these algorithms as well as their use
in the return phase $\mathcal{U}_{R}$, the latter item having already been noticed in the context 
 of the exponentiation in \cite{atbaboro2}. Therefore, we need the following definition: 

\begin{definition}\label{defconstructionkernel}
Let $\mathcal{U}^{F,n}_{D,Q,\mathcal{P}}:=(\mathcal{U}^{A}_{D,Q,\mathcal{P}},\mathcal{U}^{R}_{D,Q,\mathcal{P}})$ 
be a Chudnovsky$^2$ multiplication algorithm in a finite field $\F_{q^n}$, satisfying the assumptions of 
Proposition \ref{algocanonique}. Then the algorithm $\mathcal{U}^{F,n}_{D,Q,\mathcal{P}}$ is said kernel-type 
if the basis ${\mathcal B}_{2D}$ of $\mathcal L (2D)$ used in the evaluation map  
$\mathcal{U}^{R}_{D,Q,\mathcal{P}}:=E_Q \circ Ev_{\mathcal P}^{-1}$ 
is such that $${\mathcal B}_{2D}={\mathcal B}_{D}\cup{\mathcal B}^{c}_{D},$$
where ${\mathcal B}_{D}$ is a basis of $\mathcal L (D)$ used in the evaluation map 
$\mathcal{U}^{A}_{D,Q,\mathcal{P}}:=E_{\mathcal P}|_{{\mathcal L(2D)}}\circ Ev_Q^{-1}$ and ${\mathcal B}^{c}_{D}$ 
is a basis of  the supplementary space ${\mathcal M}:=Ker E_{Q}|_{{\mathcal L(2D)}}$ of 
$\mathcal L(D)$ in $\mathcal L(2D)$. Any construction of a kernel-type algorithm $\mathcal{U}^{F,n}_{D,Q,\mathcal{P}}$ 
will be called a kernel-type construction.
\end{definition}

\medskip


\begin{proposition} \label{optimisationalgocanonique1}
Let $\mathcal{U}^{F,n}_{D,Q,\mathcal{P}}=(\mathcal{U}^{A}_{D,Q,\mathcal{P}},\mathcal{U}^{R}_{D,Q,\mathcal{P}})$ 
be a kernel-type Chudnovsky$^2$ multiplication algorithm in a finite field $\F_{q^n}$.
The optimal scalar complexity \\ $\mu^{opti}_{s,0}(\mathcal{U}^{A}_{D,Q,\mathcal{P}})$ of 
$\mathcal{U}^{F,n}_{D,Q,\mathcal{P}}$ is reached for the set $\{{\mathcal B}_{D,max}, {\mathcal B}_Q\}$  
such that ${\mathcal B}_{D,max}$ is a basis of $\mathcal L (D)$
satisfying $$N_{z}(T_{D,max})=\max_{\sigma \in GL_{\F_q}(n)}\{N_{z}(T_{\sigma(D)})\},$$ 
where $\sigma(D)$ denotes the action of
$\sigma$ on the basis ${\mathcal B}_{D}$ of $\mathcal L (D)$ in $\mathcal{U}^{F,n}_{D,Q,\mathcal{P}}$,  $T_{D,max}$ 
the matrix of the restriction of the  evaluation map $Ev_{\mathcal P}$ on the Riemann-Roch vector space $\mathcal L(D)$ 
equipped with the bases ${\mathcal B}_{D,max}$ and ${\mathcal B}_{Q}=Ev_Q({\mathcal B}_{D,max})$.
More precisely, we have 
\begin{multline*}\mu^{opti}_{s,0}(\mathcal{U}^{A}_{D,Q,\mathcal{P}})=\min_{\sigma \in GL_{\F_q}(n)} 
\{\mu_{s,0}(\mathcal{U}^{A}_{\sigma(D),Q,\mathcal{P}}) \mid \sigma({\mathcal B}_{D}) \hbox{ is the basis of } 
\mathcal L (D) \\ \hbox{ and } {\mathcal B}_Q=Ev_Q({\mathcal B}_{D})\}
\end{multline*}
$$=2\biggr(n(2n+g-1)-N_{z}(T_{D,max})\biggl).$$
Then, the scalar complexity of the algorithm $\mathcal{U}^{F,n}_{D,Q,\mathcal{P}}$ relatively to the basis ${\mathcal B}_{D,max}$ is:
$$\mu_{s,0}(\mathcal{U}_{D,Q,\mathcal{P}})=3n(2n+g-1)-\biggr(2N_{z}(T_{D,max})+N_{z}(T^{-1}_{2D,n})\biggl),$$
where  matrices $C$ and $T_{2D}$ are defined with respect to the basis ${\mathcal B}_Q=Ev_Q({\mathcal B}_{D,max})$, and ${\mathcal B}_{2D}= {\mathcal B}_{D,max}\cup {\mathcal B}^{c}_{D}$ and $T^{-1}_{2D,n}$ denotes the matrix made up of the $n$ first lines of the matrix  $T^{-1}_{2D}$.

\end{proposition}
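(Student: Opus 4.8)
The plan is to reduce both assertions to the complexity formulas of Section~\ref{anacomp} by computing the matrices of the two morphisms $\mathcal{U}^{A}_{D,Q,\mathcal{P}}$ and $\mathcal{U}^{R}_{D,Q,\mathcal{P}}$ in the prescribed bases. The starting observation, already furnished by the proof of Proposition~\ref{algocanonique}, is that under the standing hypotheses ($D$ effective, $D-Q$ non-special of degree $g-1$, $\#\mathcal{P}=\dim\mathcal L(2D)$) we have the factorisations $\mathcal{U}^{A}_{D,Q,\mathcal{P}}=Ev_{\mathcal P}|_{\mathcal L(D)}\circ Ev_Q^{-1}$ and $\mathcal{U}^{R}_{D,Q,\mathcal{P}}=E_Q|_{\mathcal L(2D)}\circ Ev_{\mathcal P}^{-1}$. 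Imposing the normalisation $\mathcal{B}_Q=Ev_Q(\mathcal{B}_D)$ makes $Ev_Q$ the identity map in the bases $(\mathcal{B}_D,\mathcal{B}_Q)$; hence the matrix of $\mathcal{U}^{A}_{D,Q,\mathcal{P}}$ in the bases $(\mathcal{B}_Q,\mathcal{B}_c)$ is exactly $T_D$.

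First I would settle the optimisation of the forward phase $\mathcal{U}_A$. By (\ref{As}) with $N_1(T_D)=0$, equation (\ref{Az}) reads $\mu_{s,0}(\mathcal{U}^{A}_{D,Q,\mathcal{P}})=2\bigl(n(2n+g-1)-N_z(T_D)\bigr)$, a strictly decreasing affine function of $N_z(T_D)$. Letting $\sigma$ range over $GL_{\F_q}(n)$ while simultaneously updating $\mathcal{B}_Q=Ev_Q(\sigma(\mathcal{B}_D))$ replaces $T_D$ by $T_{\sigma(D)}$; therefore minimising $\mu_{s,0}(\mathcal{U}^{A}_{\sigma(D),Q,\mathcal{P}})$ over $\sigma$ is equivalent to maximising $N_z(T_{\sigma(D)})$, which is attained at $\mathcal{B}_{D,max}$ by definition of that basis. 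Substituting $N_z(T_{D,max})$ yields the announced value of $\mu^{opti}_{s,0}(\mathcal{U}^{A}_{D,Q,\mathcal{P}})$.

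For the complexity of the full algorithm, the decisive computation is the shape of the matrix $C$ in the kernel-type configuration. Since $\mathcal{B}_{2D}=\mathcal{B}_{D,max}\cup\mathcal{B}^{c}_{D}$ with $\mathcal{B}^{c}_{D}$ a basis of $\mathcal M=\ker E_Q|_{\mathcal L(2D)}$ (Definition~\ref{defconstructionkernel}), the columns of $C$ indexed by $\mathcal{B}^{c}_{D}$ all vanish, while for $i\le n$ one has $E_Q(f_i)=Ev_Q(f_i)$, which is precisely the $i$-th vector of $\mathcal{B}_Q=Ev_Q(\mathcal{B}_{D,max})$ and whose coordinate vector is therefore $e_i$. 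Hence $C=[\,I_n\mid 0\,]$ with the zero block of size $n\times(n+g-1)$, and consequently $C\,T^{-1}_{2D}$ equals the submatrix formed by the first $n$ rows of $T^{-1}_{2D}$, namely $T^{-1}_{2D,n}$, so that $N_z(C\,T^{-1}_{2D})=N_z(T^{-1}_{2D,n})$.

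It then remains to assemble the pieces through (\ref{Nzscalar}) and (\ref{ARz}): with $N_z=2N_z(T_{D,max})+N_z(C\,T^{-1}_{2D})=2N_z(T_{D,max})+N_z(T^{-1}_{2D,n})$, one obtains $\mu_{s,0}(\mathcal{U}_{D,Q,\mathcal{P}})=3n(2n+g-1)-\bigl(2N_z(T_{D,max})+N_z(T^{-1}_{2D,n})\bigr)$, as claimed. The only genuinely delicate point is to keep track of the transpose and basis conventions of Algorithm~\ref{algom} carefully enough to be certain that the identity block of $C$ lines up with the first $n$ \emph{rows} (rather than columns) of $T^{-1}_{2D}$; once the block form $C=[\,I_n\mid 0\,]$ is secured, everything else is a direct substitution into the formulas of Section~\ref{anacomp}.
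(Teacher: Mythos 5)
Your proof is correct and follows essentially the same route as the paper's: both reduce the forward-phase claim to Proposition~\ref{algocanonique} together with formulae (\ref{As}) and (\ref{Az}), and both obtain the full-complexity formula from (\ref{Nzscalar}) and (\ref{ARz}) via the kernel-type identity $C\,T^{-1}_{2D}=T^{-1}_{2D,n}$. The only difference is one of detail: where the paper simply asserts this identity from the fact that ${\mathcal B}^{c}_{D}$ spans $\ker E_{Q}|_{\mathcal L(2D)}$, you make it explicit by computing the block form $C=[\,I_n\mid 0\,]$, which is a faithful expansion of the same argument.
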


\begin{proof}
The value of  $\mu^{opti}_{s,0}(\mathcal{U}^{A}_{D,Q,\mathcal{P}})$ follows directly from Proposition \ref{algocanonique} and formulae (\ref{As}) and (\ref{Az}). 
Then, the quantity $\mu_{s,0}(\mathcal{U}_{D,Q,\mathcal{P}})$ obtained with the basis ${\mathcal B}_{D,max}$ follows from formulae (\ref{scalar}) and (\ref{Nzscalar}). Note that since the algorithm $\mathcal{U}^{F,n}_{D,Q,\mathcal{P}}$ is kernel-type then we have $CT_{2D}^{-1}=T^{-1}_{2D,n}$ because ${\mathcal B}^{c}_{D}$ is a basis of the kernel of $E_{Q}|_{{\mathcal L(2D)}}$.

\end{proof}

\begin{proposition} \label{optimisationalgocanonique2}

Let $\mathcal{U}^{F,n}_{D,Q,\mathcal{P}}$ be a kernel-type Chudnovsky$^2$ multiplication algorithm in a finite field $\F_{q^n}$
such that ${\mathcal B}_{2D}= {\mathcal B}_{D}\cup {\mathcal B}^{c}_{D}$.
The optimal scalar complexity $\mu^{opti}_{s,0}(\mathcal{U}^{F,n}_{D,Q,\mathcal{P}})$ of $\mathcal{U}^{F,n}_{D,Q,\mathcal{P}}$ 
is reached for the set $\{{\mathcal B}_{D,max}, {\mathcal B}_Q\}$  such that 
${\mathcal B}_{D,max}$ is a basis of $\mathcal L (D)$ for which
 $2N_{z}(T_{\sigma(D)})+N_{z}(T^{-1}_{2\sigma(D),n})$ is maximal 
 under the action of
$\sigma\in GL_{\F_q}(n)$ on the basis ${\mathcal B}_{D}$ of $\mathcal L (D)$ of the matrix $T_{D}$ 
where $T_{\sigma(D)}$ (resp. $T^{-1}_{2\sigma(D),n}$) denotes the matrix $T_D$ (resp. the $n$ first lines 
of the matrix  $T^{-1}_{2D}$) in the basis $\sigma({\mathcal B}_{D})$ 
(resp. $\sigma({\mathcal B}_{D})\cup {\mathcal B}^{c}_{D}$) 
of $\mathcal L (D)$ (resp. $\mathcal L (2D)$),  and ${\mathcal B}_{Q}=Ev_Q({\mathcal B}_{D,max})$.
In particular, \begin{multline*}\mu^{opti}_{s,0}(\mathcal{U}^{F,n}_{D,Q,\mathcal{P}})=\min_{\sigma \in GL_{\F_q}(n)} \{\mu_{s,0}(\mathcal{U}^{F,n}_{\sigma(D),Q,\mathcal{P}}) \mid \sigma({\mathcal B}_{D}) \hbox{ is the basis of } \mathcal L (D) \\ \hbox{ and } {\mathcal B}_Q=Ev_Q({\mathcal B}_{D})\})\end{multline*}
$$=3n(2n+g-1)-N_{z,max},$$ where  
$$N_{z,max}=\max_{\sigma \in GL_{\F_q}(n)}\{2N_{z}(T_{\sigma(D)})+N_z(T^{-1}_{2\sigma(D),n})\},$$ and 
matrices $C$ and $T_{2D}$ are defined with respect to the basis ${\mathcal B}_Q=Ev_Q({\mathcal B}_{D,max})$, and ${\mathcal B}_{2D}= {\mathcal B}_{D,max}\cup {\mathcal B}^{c}_{D}$.

\end{proposition}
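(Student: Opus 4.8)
The plan is to show that Proposition \ref{optimisationalgocanonique2} follows by the same structural argument as Proposition \ref{optimisationalgocanonique1}, but now optimizing the \emph{combined} zero-count over both matrices $T_D$ and $CT_{2D}^{-1}$ simultaneously, rather than $T_D$ alone. First I would recall from the kernel-type hypothesis (Definition \ref{defconstructionkernel}) and the last line of the proof of Proposition \ref{optimisationalgocanonique1} that $CT_{2D}^{-1}=T^{-1}_{2D,n}$, so that the scalar complexity depends, via formulae (\ref{scalar}), (\ref{Nzscalar}), and (\ref{ARz}), only on the two quantities $N_z(T_D)$ and $N_z(T^{-1}_{2D,n})$. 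Substituting into (\ref{ARz}) gives directly
\begin{equation*}
\mu_{s,0}(\mathcal{U}^{F,n}_{D,Q,\mathcal{P}})=3n(2n+g-1)-\bigl(2N_z(T_D)+N_z(T^{-1}_{2D,n})\bigr).
\end{equation*}
Thus minimizing $\mu_{s,0}$ is equivalent to maximizing $2N_z(T_D)+N_z(T^{-1}_{2D,n})$, which is exactly the quantity $N_{z,max}$ in the statement.

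Next I would analyze how the action of $\sigma\in GL_{\F_q}(n)$ on the basis ${\mathcal B}_D$ propagates through both matrices. The key point is that the change of basis $\sigma$ simultaneously affects $T_D$ (the first $n$ columns of $T_{2D}$, expressed in $\sigma({\mathcal B}_D)$) and the matrix $T_{2D}$ itself, whose basis becomes $\sigma({\mathcal B}_D)\cup {\mathcal B}^c_D$, hence affects the first $n$ rows of $T^{-1}_{2D}$ as well. This is precisely why the optimization is over a \emph{single} coupled parameter $2N_z(T_{\sigma(D)})+N_z(T^{-1}_{2\sigma(D),n})$ rather than two independent ones: the same $\sigma$ governs both terms. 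Here I would invoke Proposition \ref{algocanonique} to guarantee that this action is well-defined on the algorithm and that ${\mathcal B}_Q=Ev_Q(\sigma({\mathcal B}_D))$ can be taken as the corresponding basis of the residue field, keeping the diagram consistent. The identity ${\mathcal B}_Q=Ev_Q({\mathcal B}_{D,max})$ in the conclusion then simply records the basis realizing the maximum.

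Finally, taking the minimum over $\sigma\in GL_{\F_q}(n)$ of the displayed expression for $\mu_{s,0}$ yields
\begin{equation*}
\mu^{opti}_{s,0}(\mathcal{U}^{F,n}_{D,Q,\mathcal{P}})=3n(2n+g-1)-\max_{\sigma\in GL_{\F_q}(n)}\bigl\{2N_z(T_{\sigma(D)})+N_z(T^{-1}_{2\sigma(D),n})\bigr\}=3n(2n+g-1)-N_{z,max},
\end{equation*}
which is the claimed formula. The main obstacle I anticipate is not the final arithmetic but the careful bookkeeping in the second step: one must verify that replacing ${\mathcal B}_D$ by $\sigma({\mathcal B}_D)$ while \emph{retaining} the fixed supplementary basis ${\mathcal B}^c_D$ genuinely changes $T^{-1}_{2D,n}$ in a controlled way, and that no further freedom in ${\mathcal B}^c_D$ is being silently used—this is exactly where Proposition \ref{algocanonique}'s invariance statement (that $N_z(CT^{-1}_{2D})$ is constant under the action on the full ${\mathcal B}_{2D}$) must be reconciled with the present claim that varying only the ${\mathcal B}_D$-part does affect $N_z(T^{-1}_{2D,n})$. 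Resolving this apparent tension — the action on all of ${\mathcal B}_{2D}$ leaves the return matrix invariant, whereas the restricted action on ${\mathcal B}_D$ alone does not — is the conceptual crux, and I would address it by emphasizing that in Proposition \ref{algocanonique} the basis ${\mathcal B}_Q$ is held fixed, whereas here ${\mathcal B}_Q$ co-varies as $Ev_Q(\sigma({\mathcal B}_D))$.
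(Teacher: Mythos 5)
Your proof is correct and takes essentially the same route as the paper's, whose entire argument consists of citing exactly your three ingredients: the kernel-type identity $CT_{2D}^{-1}=T^{-1}_{2D,n}$ (valid because ${\mathcal B}^{c}_{D}$ spans $\ker E_{Q}|_{{\mathcal L}(2D)}$), formula (\ref{ARz}), and Proposition \ref{algocanonique}. Your closing observation --- that the invariance of $N_{z}(CT^{-1}_{2D})$ in Proposition \ref{algocanonique} holds with ${\mathcal B}_Q$ fixed, whereas here $N_{z}(T^{-1}_{2\sigma(D),n})$ genuinely varies because ${\mathcal B}_Q=Ev_Q(\sigma({\mathcal B}_{D}))$ co-varies with $\sigma$ --- is a correct resolution of a subtlety the paper leaves implicit.
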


\begin{proof}
The value of  $\mu^{opti}_{s,0}(\mathcal{U}_{D,Q,\mathcal{P}})$ obtained with the basis ${\mathcal B}_{D,max}$ 
follows directly from Proposition \ref{algocanonique} and formulae (\ref{ARz}).  
Note that since the algorithm $\mathcal{U}^{F,n}_{D,Q,\mathcal{P}}$ is kernel-type then we have $CT_{2D}^{-1}=T^{-1}_{2D,n}$ 
because ${\mathcal B}^{c}_{D}$ is a basis of the kernel of $E_{Q}|_{{\mathcal L(2D)}}$.
\end{proof}

\begin{remark}\label{optimisationgeneral}
Note that in Proposition \ref{optimisationalgocanonique1}, we can establish a similar result for $\mu^{opti}_{s,1}(\mathcal{U}^{A}_{D,Q,\mathcal{P}})$ (or even better resp. $\mu^{opti}_{s}(\mathcal{U}^{A}_{D,Q,\mathcal{P}})$) by optimizing the quantity $N_{1}(T_{D})$ (resp. the quantity $N_{z}(T_{D})+N_{1}(T_{D})$) instead of $N_{z}(T_{D}$). In the same way, in Proposition \ref{optimisationalgocanonique2},
we can establish a similar result for $\mu^{opti}_{s,1}(\mathcal{U}^{F,n}_{D,Q,\mathcal{P}})$ (or even better resp. $\mu^{opti}_{s}(\mathcal{U}^{F,n}_{D,Q,\mathcal{P}})$) by optimizing the quantity $2N_{1}(T_{D})+N_{1}(T^{-1}_{2D,n})$ (resp. the quantity $2(N_{z}(T_{D})+N_{1}(T_{D}))+N_{z}(T^{-1}_{2D,n})+N_{1}(T^{-1}_{2D,n})$) instead of $2N_{z}(T_{D})+N_{z}(T^{-1}_{2D,n})$.
\end{remark}

Now, from these two previous results, we can highlight several strategies to improve the scalar complexity. 
The complexities of these strategies are clearly different. Therefore, the use of this or that strategy 
may be useful depending on the constraints to which we are subject.
New setup algorithms can be obtained directly from the analysis developed in Section \ref{DQfixed}.
More precisely, the following setup corresponds to the optimization of $\mu_{s,0}(\mathcal{U}^{A}_{D,Q,\mathcal{P}})$ described by Proposition  \ref{optimisationalgocanonique1}.

\begin{algorithm}[H]
\caption{New setup algorithm of CCMA in $\F_{q^n}$ from Proposition \ref{optimisationalgocanonique1}}\label{setup1}
\begin{algorithmic}
\Require $F/{\mathbb F}_{q},~ Q,  D$, ${\mathcal P}=\{P_1,\ldots, P_{2n+g-1}\}$.
\Ensure  ${\mathcal B}_{2D}$, ${\mathcal B}_Q$, $T_{2D} \hbox{ and } T_{2D,n}^{-1}.$
        \begin{enumerate}
          \item Check the function field $F/{\mathbb F}_{q}$, the place $Q$, the divisors $D$  are such that Conditions (i) 
          and (ii) in Theorem \ref{theoprinc} can be satisfied.
          \item Take an initial basis ${\mathcal B}_{D,0}$ for $\mathcal L(D)$ and construct a basis ${\mathcal B}^{c}_{D}:=\{f_{n+1},...,f_{2n+g-1}\}$ 
          of the supplementary space ${\mathcal M}:=Ker E_{Q}|_{{\mathcal L(2D)}}$ of $\mathcal L(D)$ in $\mathcal L(2D)$. 
          \item Go through the set (or subset) of bases ${\mathcal B}_{D}$ of $\mathcal L(D)$ from ${\mathcal B}_{D,0}$ and linear group $GL_q(n)$ in order to   
          compute $T_D$ and $N_z(T_D)$.
          \item
          Choose a basis ${\mathcal B}_{D}:=\{f_1, \ldots, f_n\}$ such that the matrix $T_{D}$ owns the largest number of zeros 
          (i.e. such that ${\mathcal B}_{D}:={\mathcal B}_{D,max}$ and $T_{D}:=T_{D,max}$).
           \item Set ${\mathcal B}_Q:=Ev_Q({\mathcal B}_{D,max})$ and ${\mathcal B}_{2D}:={\mathcal B}_{D,max}\cup{\mathcal B}^{c}_{D}$.
          \item Compute the matrices $T_{2D} \hbox{ and } T_{2D,n}^{-1}$ in the basis ${\mathcal B}_{2D}$.
           \end{enumerate}
\end{algorithmic}
\end{algorithm}

In the same way, from Proposition  \ref{optimisationalgocanonique2}, we can obtain the following new setup corresponding 
to the optimization of $\mu_{s,0}(\mathcal{U}^{F,n}_{D,Q,\mathcal{P}})$.

\begin{algorithm}[H]
\caption{New setup algorithm of CCMA in $\F_{q^n}$ from Proposition \ref{optimisationalgocanonique2}}\label{setup2}
\begin{algorithmic}
\Require $F/{\mathbb F}_{q},~ Q,  D$, ${\mathcal P}=\{P_1,\ldots, P_{2n+g-1}\}$.
 \Ensure ${\mathcal B}_{2D}$, ${\mathcal B}_Q$, $T_{2D} \hbox{ and } T_{2D,n}^{-1}.$

        \begin{enumerate}
          \item Check the function field $F/{\mathbb F}_{q}$, the place $Q$, the divisors $D$  are such that Conditions (i) and (ii) in Theorem \ref{theoprinc} can be satisfied.
          \item Take an initial basis ${\mathcal B}_{D,0}$ for $\mathcal L(D)$ and construct a basis ${\mathcal B}^{c}_{D}:=\{f_{n+1},...,f_{2n+g-1}\}$ 
          of the supplementary space ${\mathcal M}:=Ker E_{Q}|_{{\mathcal L(2D)}}$ of $\mathcal L(D)$ in $\mathcal L(2D)$. 
          \item Go through the set (or subset) of bases ${\mathcal B}_{D}$ of $\mathcal L(D)$ from ${\mathcal B}_{D,0}$ and linear group $GL_q(n)$ in order to   
          compute $T_D$ (resp. $T^{-1}_{2D,n}$ with ${\mathcal B}_{2D}={\mathcal B}_{D}\cup{\mathcal B}^{c}_{D}$) and $N_z(T_D)$ (resp. $N_z(T^{-1}_{2D,n})$).
          \item
          Choose a basis ${\mathcal B}_{D}:=\{f_1, \ldots, f_n\}$ such that $2N_z(T_{D})+N_z(T^{-1}_{2D,n})$ is the largest possible 
          (i.e. such that ${\mathcal B}_{D}:={\mathcal B}_{D,max}$ and $2N_z(T_{D})+N_z(T^{-1}_{2D,n}):=N_{z,max}$).

           \item Set ${\mathcal B}_Q:=Ev_Q({\mathcal B}_{D,max})$ and ${\mathcal B}_{2D}:={\mathcal B}_{D,max}\cup{\mathcal B}^{c}_{D}$.
           \end{enumerate}
\end{algorithmic}
\end{algorithm}

\begin{remark}\label{Remark1Setup2}
Note that in Algorithm \ref{setup2}, Step 6 of the Algorithm \ref{setup1} was performed in steps 3  and 4
since in order to construct $T^{-1}_{2D,n}$, we required to construct before the matrix $T_{2D}$.
\end{remark}

\begin{remark}\label{Remark2Setup2}
Note that in the setup algorithm \ref{setup1}, the steps 3 and 4 may be substituted by : 
choose a basis ${\mathcal B}_{D}:=(f_1, \ldots, f_n)$ such that the matrix $T_{D}$ owns 
the largest number of 1 or the largest number of 0 or 1 taken together, in the same spirit 
as Remark \ref{optimisationgeneral}. So, in the setup algorithm \ref{setup2}, the number $N_z$ 
in the steps 3 and 4 may be substituted by the number $N_1$ resp. $N_z+N_1$ 
for each matrice $T_D$ and $T^{-1}_{2D,n}$. However, we have chosen in this paper 
to focus particularly on  the number of zeros because it is possible to give an upper 
bound on this value, as we will see below.
\end{remark}

\vspace{0.3cm}

Indeed, let $F/\F_q$ be an algebraic function field of genus $g$ and  let ${\mathcal P}=\{P_1,...,P_N\}$ be an ordered set of  
pairwise distinct places of degree one in $F/\F_q$. Let us adapt slightly the notation used in \cite{stic2} to be homogeneous with the notation used in the description of CCMA. 
So, we consider that the divisors $G=P_1+\cdots+P_N$ and $D$ are divisors of $F/\F_q$ such that $suppG \cap suppD=\emptyset$. The algebraic geometry code (or Goppa code) $C_{\mathcal{L}}(G,D)$ associated with the divisors $G$ and $D$ is defined as
$$C_{\mathcal{L}}(G,D):=\{(f(P_1),...,f(P_N)) | f\in \mathcal{L}(D)\} \subseteq \F_q^N.$$
Then $C_{\mathcal{L}}(G,D)$ is an $[N,k,d]$ code with parameters $k=\dim\mathcal{L}(D)-\dim\mathcal{L}(D-G)$ and minimum distance $d \geq N-\deg D$ 
by \cite[Theorem 2.2.2]{stic2}. If $\{f_1,...,f_k\}$ is a basis of $\mathcal{L}(D)$, then by \cite[Corollary 2.2.3]{stic2} we have the following generator matrix for $C_{\mathcal{L}}(G,D)$
$$M:=\left(
  \begin{array}{ccc}
    f_1(P_1) & \cdots & f_1(P_N) \\
    f_2(P_1) & \cdots & f_2(P_N) \\
    \vdots & \vdots & \vdots \\
    f_k(P_1)& \cdots & f_k(P_N)\\
  \end{array}
\right).$$

In the Chudnovsky$^2$ multiplication algorithm (CCMA) defined in the context of Theorem \ref{theoprinc}, we consider the bijective evaluation map
$$\begin{array}{lccl}
 Ev_{\mathcal P}: & \mathcal L(2D) & \rightarrow & \mathbb F_q^N. \\
    &  f              & \mapsto     & \left(\strut f\left(P_1\right),\ldots,f\left(P_N\right)\right)
\end{array}$$
where $\deg D=n+g-1$ and $N=2n+g-1$.
Moreover, we recall that our construction of CCMA is made with the assumptions of Proposition \ref{algocanonique}, hence $\mathcal{L}(D)\subseteq \mathcal{L}(2D)$ since $D$ is an effective divisor. Then, the image of the restriction $Ev_{\mathcal{P}}|_{\mathcal{L}(D)}$ of $Ev_{\mathcal{P}}$ on $\mathcal{L}(D)$  is a 
$\F_q$-vector subspace of $\F_q^N$ of dimension $n$ 
 which can be seen as an algebraic geometry code $C_{\mathcal{L}}(G,D)=[N,n,d]$ where $G=P_1+\cdots+P_N$. 
 Therefore, we can prove the following results.

\begin{proposition}\label{propositioncode}
Let $\mathcal{U}^{F,n}_{D,Q,\mathcal{P}}$ be a Chudnovsky$^2$ multiplication algorithm in a finite field $\F_{q^n}$, satisfying the assumptions of Proposition \ref{optimisationalgocanonique1}.
Then we have: $$N_z(T_D)\leq n(n+g-1).$$ 
\end{proposition}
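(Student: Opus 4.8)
The plan is to read $T_D$ as (the transpose of) the generator matrix of the algebraic geometry code $C_{\mathcal{L}}(G,D)$ introduced just before the statement, and then turn the lower bound on its minimum distance into a per-row bound on the number of zeros. Concretely, with respect to the basis $\mathcal{B}_D=\{f_1,\ldots,f_n\}$ of $\mathcal{L}(D)$ and the canonical basis of $\F_q^N$, the entries of $T_D$ are the values $f_j(P_i)$, so up to transposition $T_D$ coincides with the matrix $M$ displayed above, whose rows are $(f_i(P_1),\ldots,f_i(P_N))$. In particular $N_z(T_D)=N_z(M)$, and it suffices to bound the number of zeros of $M$.

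The first substantive step is to invoke the code parameters. By the identification made in the paragraph preceding the proposition, $C_{\mathcal{L}}(G,D)$ with $G=P_1+\cdots+P_N$ is an $[N,n,d]$ code, where $N=2n+g-1$ and $\deg D=n+g-1$. By \cite[Theorem 2.2.2]{stic2} its minimum distance satisfies
$$d\geq N-\deg D=(2n+g-1)-(n+g-1)=n.$$
This is the only external input needed.

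The second step is the key observation that each of the $n$ rows of $M$ is a \emph{nonzero} codeword: the $i$-th row is exactly $Ev_{\mathcal{P}}(f_i)$, the image of a basis vector of $\mathcal{L}(D)$, and since $f_i\neq 0$ while $Ev_{\mathcal{P}}$ is injective on $\mathcal{L}(2D)\supseteq\mathcal{L}(D)$ (recall $D$ is effective under the assumptions of Proposition \ref{optimisationalgocanonique1}), this image is nonzero. Being a nonzero codeword, each row has Hamming weight at least $d\geq n$, hence at most $N-n=(2n+g-1)-n=n+g-1$ zero coordinates. Summing over the $n$ rows gives $N_z(M)\leq n(n+g-1)$, and therefore $N_z(T_D)\leq n(n+g-1)$.

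The argument is short, and the main point requiring care is not a hard estimate but the bookkeeping: one must be explicit that $T_D$ is (a transpose of) the code's generator matrix so that $N_z(T_D)=N_z(M)$, and that every row genuinely lies in the code and is nonzero, so that the distance bound applies row by row. Once these identifications are fixed, the bound is an immediate consequence of applying the Goppa distance bound to each row of $M$.
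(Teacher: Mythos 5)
Your proof is correct and follows essentially the same route as the paper: identify $T_D^t$ as a generator matrix of the algebraic geometry code $C_{\mathcal{L}}(G,D)=[N,n,d]$, apply the Goppa bound $d\geq N-\deg D$ of \cite[Theorem 2.2.2]{stic2}, and bound the zeros row by row since each row is a nonzero codeword of weight at least $d$. The only cosmetic difference is that the paper states the weight bound in aggregate form, $N_{nz}(T_D)\geq n\cdot d$, whereas you make the per-row argument (and the nonvanishing of each row via injectivity of $Ev_{\mathcal{P}}$) explicit, which is a slightly more careful writeup of the same estimate.
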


\begin{proof}
The matrix $T_D$ is such that
\begin{equation}\label{Nz_TD}
N_z(T_D)=n\cdot N - N_{nz}(T_D),
\end{equation}
where $N_{nz}(T_D)$ denotes the number of non-zero entries of $T_D$ and $N=2n+g-1$.
Moreover,  as $Ev_{\mathcal{P}}(\mathcal{L}(D))$ is an algebraic geometry code $C_{\mathcal{L}}(G,D)=[N,n,d]$ where $G=P_1+\cdots+P_N$, 
then $T_D^t$ is a generator matrix of this code. So, we have 
\begin{equation}\label{Nnz_TD}
N_{nz}(T_D) \geq n\cdot d,
\end{equation}
by the definition of the minimal distance of a code. Moreover, we have 
\begin{equation}\label{dAGC}
d\geq N - \deg D
\end{equation}
 by \cite[Theorem 2.2.2]{stic2}. 
So, we obtain by (\ref{Nz_TD}), (\ref{Nnz_TD}) and (\ref{dAGC}): 

\begin{equation}\label{upperbound_NzTD}
     N_z(T_D)\leq n\cdot\deg D
\end{equation}
As $\deg D=n+g-1$, we obtain the result.

\end{proof}

\begin{theorem}\label{corocode}
Let $\mathcal{U}^{F,n}_{D,Q,\mathcal{P}}=(\mathcal{U}^{A}_{D,Q,\mathcal{P}},\mathcal{U}^{R}_{D,Q,\mathcal{P}})$ be a Chudnovsky$^2$ multiplication algorithm in a finite field $\F_{q^n}$, satisfying the assumptions of Proposition \ref{optimisationalgocanonique1}.
Then we have: $$\mu_{s,0}(\mathcal{U}^{A}_{D,Q,\mathcal{P}}) \geq 2n^2$$ and $$\mu_{s,0}(\mathcal{U}^{F,n}_{D,Q,\mathcal{P}}) > 2n^2.$$

\end{theorem}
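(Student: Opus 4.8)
The plan is to derive both inequalities directly from the already-established upper bound on $N_z(T_D)$ in Proposition \ref{propositioncode}, together with the explicit formulae for the scalar complexity quantities given in Section \ref{anacomp}. The key observation is that the quantities $\mu_{s,0}(\mathcal{U}^{A}_{D,Q,\mathcal{P}})$ and $\mu_{s,0}(\mathcal{U}^{F,n}_{D,Q,\mathcal{P}})$ are decreasing functions of the number of zeros, so an \emph{upper} bound on $N_z(T_D)$ translates into a \emph{lower} bound on the scalar complexity.

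First I would treat the step-1 complexity. By formula (\ref{Az}), combined with (\ref{As}), we have
$$\mu_{s,0}(\mathcal{U}^{A}_{D,Q,\mathcal{P}})=2\bigl(n(2n+g-1)-N_{z}(T_{D})\bigr).$$
Substituting the bound $N_z(T_D)\leq n(n+g-1)$ from Proposition \ref{propositioncode} yields
$$\mu_{s,0}(\mathcal{U}^{A}_{D,Q,\mathcal{P}})\geq 2\bigl(n(2n+g-1)-n(n+g-1)\bigr)=2n^2,$$
since $n(2n+g-1)-n(n+g-1)=n\bigl((2n+g-1)-(n+g-1)\bigr)=n\cdot n=n^2$. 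This establishes the first inequality.

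For the second inequality I would use formula (\ref{ARz}), which expresses the total optimized complexity as
$$\mu_{s,0}(\mathcal{U}^{F,n}_{D,Q,\mathcal{P}})=\mu_{s,0}(\mathcal{U}^{A}_{D,Q,\mathcal{P}})+\mu_{s,0}(\mathcal{U}^{R}_{D,Q,\mathcal{P}}).$$
The first summand is already $\geq 2n^2$ by the previous step, so it suffices to show that the return-phase contribution $\mu_{s,0}(\mathcal{U}^{R}_{D,Q,\mathcal{P}})$ is strictly positive. By formula (\ref{Rs}) with $N_1=0$, this contribution equals $n(2n+g-1)-N_{z}(C.T^{-1}_{2D})$, and I would argue it cannot vanish: the matrix $C.T^{-1}_{2D}$ (equal to $T^{-1}_{2D,n}$ in the kernel-type setting) is the matrix of the surjective map $E_Q$ restricted to $\mathcal{L}(2D)$ onto the $n$-dimensional residue field $F_Q$, so it has full row rank $n$ and hence each of its $n$ rows contains at least one nonzero entry, forcing $N_{z}(C.T^{-1}_{2D})<n\cdot N=n(2n+g-1)$. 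The strictness then propagates: $\mu_{s,0}(\mathcal{U}^{F,n}_{D,Q,\mathcal{P}})\geq 2n^2+\mu_{s,0}(\mathcal{U}^{R}_{D,Q,\mathcal{P}})>2n^2$.

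The main obstacle is making rigorous the claim that the return-phase term is strictly positive, i.e. that $C.T^{-1}_{2D}$ cannot be a matrix consisting entirely of zeros and ones filling every row completely with zeros except where forced. The clean way is the rank argument above: a full-rank $n\times N$ matrix cannot have an all-zero row, so at least $n$ entries are nonzero and the count $n(2n+g-1)-N_z(C.T^{-1}_{2D})$ is at least $n>0$. One should double-check that the surjectivity of $Ev_Q$ (hence of $E_Q|_{\mathcal{L}(2D)}$) guaranteed by the hypotheses of Theorem \ref{theoprinc} indeed yields full row rank for the associated matrix in the chosen bases, which is immediate since the matrix represents a surjective linear map onto an $n$-dimensional space.
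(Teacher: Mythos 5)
Your proof is correct and follows essentially the same route as the paper: the first inequality comes from substituting the bound of Proposition \ref{propositioncode} into formulae (\ref{As}) and (\ref{Az}), and the second from adding the return-phase contribution via (\ref{ARz}) together with the strict bound $N_z(C.T^{-1}_{2D}) < n(2n+g-1)$. The only difference is that the paper dismisses that last bound as ``trivial,'' whereas you justify it properly by noting that the matrix of the surjective map onto the $n$-dimensional residue field has full row rank and hence no all-zero row.
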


\begin{proof}
By Equalities (\ref{As}) and (\ref{Az}), we have $$\mu_{s,0}(\mathcal{U}^{A}_{D,Q,\mathcal{P}})=2(n(2n+g-1)-N_z(T_D).$$
Then, since $N_z(T_D)\leq n(n+g-1)$ by Proposition \ref{propositioncode}, we deduce the first inequality. 
Moreover, we have the trivial bound $N_z(T^{-1}_{2D,n})< n(2n+g-1)$. Thus, as 
$\mu_{s,0}(\mathcal{U}^{F,n}_{D,Q,\mathcal{P}})=3n(2n+g-1)-N_z=3n(2n+g-1)-2N_z(T_D)-N_z(T^{-1}_{2D,n})$ by Equality (\ref{ARz}), we obtain 
$\mu_{s,0}(\mathcal{U}^{F,n}_{D,Q,\mathcal{P}})>2n^2$.

\end{proof}

Now, we can give an optimization using a criterium obtained from Proposition \ref{propositioncode}.

\begin{algorithm}[H]
\caption{New setup algorithm of CCMA in $\F_{q^n}$ from Proposition \ref{propositioncode}} \label{setup3}
\begin{algorithmic}
\Require $F/{\mathbb F}_{q},~ Q,  D$, ${\mathcal P}=\{P_1,\ldots, P_{2n+g-1}\}$.
\Ensure  ${\mathcal B}_{2D}$, ${\mathcal B}_Q$, $T_{2D} \hbox{ and } T_{2D,n}^{-1}.$
        \begin{enumerate}
          \item Check the function field $F/{\mathbb F}_{q}$, the place $Q$, the divisors $D$  are such that Conditions (i) and (ii) in Theorem \ref{theoprinc} can be satisfied.
          \item Take an initial basis ${\mathcal B}_{D,0}$ for $\mathcal L(D)$ and construct a basis ${\mathcal B}^{c}_{D}:=\{f_{n+1},...,f_{2n+g-1}\}$ 
          of the supplementary space ${\mathcal M}:=Ker E_{Q}|_{{\mathcal L(2D)}}$ of $\mathcal L(D)$ in $\mathcal L(2D)$. 
          \item Go through the set (or subset) of bases ${\mathcal B}_{D}$ of $\mathcal L(D)$ from ${\mathcal B}_{D,0}$ and linear group $GL_q(n)$ 
          in order to compute $T_D$ and to construct the set $mB_D=\{ {\mathcal B}_{D}\mid N_z(T_D)=n(n+g-1)\}$.
          \item
          Choose a basis ${\mathcal B}_{D}:=\{f_1, \ldots, f_n\}\in mB_D$ such that $N_z(T^{-1}_{2D,n})$ is the largest possible. 
           \item Set ${\mathcal B}_Q:=Ev_Q({\mathcal B}_{D,max})$ and ${\mathcal B}_{2D}:={\mathcal B}_{D,max}\cup{\mathcal B}^{c}_{D}$.
           \item Compute the matrices $T_{2D} \hbox{ and } T_{2D,n}^{-1}$ in the basis 
          ${\mathcal B}_{2D}$.
           \end{enumerate}
\end{algorithmic}
\end{algorithm}

\begin{remark}\label{optimizationcriteriumNzthenN1}
Note that in the setup algorithm \ref{setup3}, the step 4 may be substituted by the best following criterium: choose a basis ${\mathcal B}_{D}\in mB_D$ such that $2N_1(T_{D})+N_z(T^{-1}_{2D,n})+N_1(T^{-1}_{2D,n})$ is the largest possible.
\end{remark}

\begin{remark}\label{FinalRemarkGenericStrategies}
As one can see, the algorithms proposed in this section are generic and in this sense they are well automatized for any set $(q,n,F/\F_q,D, Q)$.
Indeed the complexity of the optimization increases with the cardinal of $GL_q(n)$.
However, this complexity of optimization (although not having currently an accurate estimate) is 
much lower than that of a brute force optimization where all the bases of each of the vector spaces 
involved in the two linear applications must be tested. In fact, the strong point of the analysis 
conducted in this section is that it shows that the only relevant lever to optimize the CCMA algorithm concerns 
the representation of the $\mathcal L(D)$ space and only this space. Therefore, most of 
the complexity of this optimization lies in running over the linear group (or a subset) underlying this space, 
as well as in related operations.
\end{remark}

\subsubsection{Strategy of complete optimization}\label{strategiecomplete}

In the view of a complete optimization (with respect to scalar complexity i.e. with fixed bilinear complexity) 
of the multiplication in a finite field $\F_{q^n}$ by a Chudnovsky$^2$ multiplication algorithm, 
we have to vary the eligible sets $(F,D,Q,\mathcal{P})$.
We can vary the couples $(D,Q)$ 
satisfying the assumptions of Proposition \ref{algocanonique} 
and apply complete optimization Algorithm \ref{setup2} (or Algorithm \ref{setup2} with optimization 
criterium $N_1$ resp. $N_z+N_1$ as mentioned in Remark \ref{Remark2Setup2}): for instance, we can start by fixing the place $Q$ 
and then vary the suitable divisors $D$. 
Concerning the set $\mathcal{P}$ of rational places, we can show that two algorithms which differ only 
by the order of the places on which we evaluate have the same scalar complexity.
That is to say, for any permutation $\pi$ of the set $\mathcal{P}$, we wonder whether $\mathcal{U}^{F,n}_{D,Q,\pi(\mathcal{P})}$ is different from 
$\mathcal{U}^{F,n}_{D,Q,\mathcal{P}}$ in order to answer to the open problem mentioned in \cite[Remark 3]{baboda}. 
The action of $\pi$ corresponds to  a permutation of the canonical basis $\mathcal{B}_c$ of $\F_q^{2n+g-1}$. 
It corresponds to a permutation of the rows of the matrix $T_{2D}$. In this case, $N_{z}(T_{D})$ and $N_{1}(T_{D})$ 
are  obviously constant under the action of $\pi$. The following proposition also enables us to claim that $N_{z}(C.T^{-1}_{2D})$ 
and $N_{1}(C.T^{-1}_{2D})$ are constant under the action of $\pi$. 

\begin{proposition}
Let us consider an algorithm $\mathcal{U}^{F,n}_{D,Q,\mathcal{P}}$ such that $D$ is an effective divisor, 
$D-Q$ a non-special divisor of degree $g-1$, and $|\mathcal{P}|=\dim \mathcal{L}(2D)=N$.

Then for any $\pi$ in $S_N$ where $S_N$ is the symmetric group on the set $\{1,2,...,N\}$, we have $$\mu_s(\mathcal{U}^{F,n}_{D,Q,\mathcal{P}})= \mu_s(\mathcal{U}^{F,n}_{D,Q,\mathcal{\pi(P)}})$$ and 
$$\mu_{s,0}(\mathcal{U}^{F,n}_{D,Q,\mathcal{P}})= \mu_{s,0}(\mathcal{U}^{F,n}_{D,Q,\mathcal{\pi(P)}}).$$
In particular, the quantities $N_{z}(T_D)$ (resp. $N_{1}(T_D)$) and $N_{z}(CT^{-1}_{2D})$ (resp. $N_{1}(CT^{-1}_{2D})$) 
are constants under the action $\pi$.
\end{proposition}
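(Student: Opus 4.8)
The plan is to track how the permutation $\pi$ transforms each of the matrices $T_D$, $T_{2D}$, $C$ and $CT_{2D}^{-1}$ appearing in Algorithm \ref{algom}, and then to conclude via formulae (\ref{scalar}) and (\ref{ARz}). As already observed just before the statement, replacing $\mathcal{P}$ by $\pi(\mathcal{P})$ amounts to permuting the canonical basis $\mathcal{B}_c$ of $\F_q^N$, hence to permuting the rows of $T_{2D}$ (and of its submatrix $T_D$). Writing $P_\pi$ for the associated permutation matrix, the matrix of $Ev_{\pi(\mathcal{P})}$ in the bases $(\mathcal{B}_{2D},\mathcal{B}_c)$ is $T'_{2D}=P_\pi T_{2D}$, with corresponding submatrix $T'_D=P_\pi T_D$. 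Since a row permutation changes neither the number of zero entries nor the number of entries equal to $1$, one gets at once $N_z(T'_D)=N_z(T_D)$ and $N_1(T'_D)=N_1(T_D)$.

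The key observation for the return step is that the matrix $C$ does not depend on $\mathcal{P}$ at all: by definition $C$ is the matrix of the restriction of $E_Q$ to $\mathcal{L}(2D)$, expressed from $\mathcal{B}_{2D}$ to $\mathcal{B}_Q$, and these data involve only $Q$, $D$ and the fixed bases. Hence $C$ is invariant under $\pi$. The point I would then exploit is that the inversion turns the left row-permutation into a right column-permutation: from $T'_{2D}=P_\pi T_{2D}$ we obtain $(T'_{2D})^{-1}=T_{2D}^{-1}P_\pi^{-1}$, so that
\[
C\,(T'_{2D})^{-1}=C\,T_{2D}^{-1}\,P_\pi^{-1}.
\]
In other words, passing from $\mathcal{P}$ to $\pi(\mathcal{P})$ permutes the columns of $CT_{2D}^{-1}$ according to $\pi^{-1}$. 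A column permutation again preserves the number of zeros and the number of ones, whence $N_z(C(T'_{2D})^{-1})=N_z(CT_{2D}^{-1})$ and $N_1(C(T'_{2D})^{-1})=N_1(CT_{2D}^{-1})$.

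Having established that the four quantities $N_z(T_D)$, $N_1(T_D)$, $N_z(CT_{2D}^{-1})$ and $N_1(CT_{2D}^{-1})$ are invariant under $\pi$, the equalities (\ref{Nzscalar}) and (\ref{N1scalar}) show that the aggregate quantities $N_z$ and $N_1$ are invariant as well. Substituting into $\mu_s=3n(2n+g-1)-N_z-N_1$ from (\ref{scalar}) and into $\mu_{s,0}=3n(2n+g-1)-N_z$ from (\ref{ARz}) then yields $\mu_s(\mathcal{U}^{F,n}_{D,Q,\mathcal{P}})=\mu_s(\mathcal{U}^{F,n}_{D,Q,\pi(\mathcal{P})})$ and $\mu_{s,0}(\mathcal{U}^{F,n}_{D,Q,\mathcal{P}})=\mu_{s,0}(\mathcal{U}^{F,n}_{D,Q,\pi(\mathcal{P})})$, as required.

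The only genuinely delicate point is the bookkeeping of the middle paragraph: one must be careful that $\pi$ acts on the \emph{rows} of $T_{2D}$ but, because of the inversion, on the \emph{columns} of $CT_{2D}^{-1}$, and that $C$ itself is left untouched precisely because it carries no dependence on the evaluation places. Once this asymmetry is correctly identified, everything reduces to the elementary remark that permuting rows or columns of a matrix leaves its zero/one counts unchanged; the hypotheses of Proposition \ref{algocanonique} (effective $D$, non-special $D-Q$ of degree $g-1$, and $|\mathcal{P}|=\dim\mathcal{L}(2D)=N$) are used only to guarantee that $Ev_{\mathcal{P}}$ is invertible and that $C$ and $T_{2D}^{-1}$ are well defined.
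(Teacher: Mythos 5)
Your proof is correct and takes essentially the same approach as the paper's: the paper encodes the permutation as an isomorphism $p$ of $\F_q^N$ and writes $\mathcal{U}^A_{D,Q,\pi(\mathcal{P})}=p\circ \mathcal{U}^A_{D,Q,\mathcal{P}}$ and $\mathcal{U}^R_{D,Q,\pi(\mathcal{P})}=\mathcal{U}^R_{D,Q,\mathcal{P}}\circ p^{-1}$, which is precisely your matrix bookkeeping $T'_{2D}=P_\pi T_{2D}$ and $C(T'_{2D})^{-1}=CT_{2D}^{-1}P_\pi^{-1}$. In both arguments the conclusion then follows because permuting rows (resp.\ columns) leaves $N_z$ and $N_1$ unchanged, and the scalar complexities are determined by these counts via the formulae (\ref{scalar}) and (\ref{ARz}).
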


\begin{proof}
Let $\mathcal{P}:=\{P_1,P_2,...,P_N\}$ be the ordered set of $N$ rational places used in the algorithm $\mathcal{U}^{F,n}_{D,Q,\mathcal{P}}$. 
We consider the action of the permutation $\pi\in S_N$ on the set $\mathcal{P}$ by setting 
$\mathcal{P}'=\pi . \mathcal{P}=\{P_{\pi(1)},P_{\pi(2)},...,P_{\pi(N)}\}$.

\medskip

Given a basis $\mathcal{B}_{2D}$ of Riemann-Roch space $\mathcal{L}(2D)$, we consider two evaluation maps:
\begin{equation} \label{EvP1}
   \begin{array}{ccl}
Ev_{\mathcal{P}}:  \mathcal L(2D) & \rightarrow & \mathbb{F}_q^N \\
      f              & \mapsto     & \big ( f(P_1),...,f(P_N)\big )
\end{array}
\end{equation}
and
\begin{equation} \label{EvP2}
   \begin{array}{ccl}
Ev_{\mathcal{P}'}:  \mathcal L(2D) & \rightarrow & \mathbb{F}_q^N \\
      f              & \mapsto     & \big ( f(P_{\pi(1)}),...,f(P_{\pi(N)})\big )
\end{array}
\end{equation}
We denote $\mathcal{B}_{\mathbb{F}_q^N}^c=(e_1,...,e_N)$ the canonical basis of $\mathbb{F}_q^N$ in (\ref{EvP1}) and $\mathcal{B}_{\mathbb{F}_q^N}^{\pi}=(e_{\pi(1)},...,e_{\pi(N)})$ the basis of $\mathbb{F}_q^N$ in (\ref{EvP2}).

Let us define an isomorphism $p: \mathbb{F}_q^N  \rightarrow \mathbb{F}_q^N$ by $p(e_i)=e_{\pi(i)}$ for $i=1..N$. The matrix representation of this map is denoted by $P$. We see that $P$ is a permutation matrix and note that $P^{-1}=P^t$. We have $$Ev_{\mathcal{P}'}=p\circ Ev_{\mathcal{P}}.$$
Then \begin{equation}\label{Aller_permu}
        \mathcal{U}^A_{D,Q,\mathcal{P}'}=E_{\mathcal{P}'}\circ Ev_{Q}^{-1} =p\circ E_{\mathcal{P}}\circ Ev_{Q}^{-1}=p\circ \mathcal{U}^A_{D,Q,\mathcal{P}}
      \end{equation}
and
\begin{equation}\label{Retour_permu}
        \mathcal{U}^R_{D,Q,\mathcal{P}'}=E_{Q}\circ Ev_{\mathcal{P}'}^{-1}|_{Im( Ev_{\mathcal P'})} =E_{Q}\circ \big( p\circ Ev_{\mathcal{P}}|_{Im(Ev_{\mathcal P})} \big )^{-1} =  \mathcal{U}^R_{D,Q,\mathcal{P}}\circ p^{-1}.
      \end{equation}

Observing the changement of the positions of rows of $T_{2D}$ and columns of $CT_{2D}^{-1}$ affected by (\ref{Aller_permu}) and (\ref{Retour_permu}) respectively, we have $N_{z}(T_D)$ (resp. $N_{1}(T_D)$) and $N_{z}(CT_{2D}^{-1})$ (resp. $N_{1}(CT_{2D}^{-1})$) are constants for any $\pi \in S_N$.

 By (\ref{scalar}) we obtain $$\mu_s(\mathcal{U}^{F,n}_{D,Q,\mathcal{P}})= \mu_s(\mathcal{U}^{F,n}_{D,Q,\mathcal{\pi(P)}})$$ and 
 $$\mu_{s,0}(\mathcal{U}^{F,n}_{D,Q,\mathcal{P}})= \mu_{s,0}(\mathcal{U}^{F,n}_{D,Q,\mathcal{\pi(P)}})$$
 for any $\pi\in S_N.$
\end{proof}


Finally, we can then look for a fixed suitable algebraic function field of genus $g$, 
up to isomorphism, and repeat all the previous steps. Moreover, it is still possible to look at 
the trade-off between scalar complexity and bilinear complexity by increasing the genus and 
then re-conducting all the previous optimizations (i.e. we take algebraic function fields with a genus 
larger than required for multiplying in $\F_{q^n})$.

\subsection{Optimization of scalar complexity in the elliptic case} \label{optimisationscalarelliptique}

Now, we study a specialisation of the Chudnovsky$^2$ multiplication algorithm of type (\ref{directproductalgoChud}) 
in the case of the elliptic curves (cf. inequality (\ref{ine})). In particular, we improve the effective algorithm constructed in the article of U. Baum and M.A. Shokrollahi \cite{bash} which presented an optimal algorithm from the point of view of the bilinear complexity in the case of the multiplication in $\mathbb F_{256}/\mathbb F_4$ based on Chudnovsky$^2$ multiplication algorithm applied on the Fermat curve $x^3+y^3=1$ defined over $\mathbb F_4$. Our method of construction leads to a multiplication algorithm in $\mathbb F_{256}/\mathbb F_4$ having a lower scalar complexity with an optimal bilinear complexity.

\subsubsection{Experiment of Baum-Shokrollahi}\label{BSExperiment}

The article \cite{bash} presents Chudnovsky$^2$ multiplication in $\mathbb F_{4^4}$, for the case $q=4$ and $n=4$.
The elements of $\mathbb F_4$ are denoted by $0,1,\omega$ and $\omega^2$.  The algorithm construction requires the use of an elliptic curve over $\mathbb F_4$ with at least 9 $\mathbb F_4$-rational points (which is the maximum possible number by Hasse-Weil Bound).
Note that in this case, Conditions $1)$ and $2)$ of Theorem \ref{theoprinc} are well satisfied. It is well known that the Fermat curve $u^3+v^3=1$ satisfies this condition.
By the substitutions $x=1/(u+v)$ and $y=u/(u+v)$, we get the isomorphic curve $y^2+y=x^3+1$.
From now on, $F/\mathbb F_q$ denotes the algebraic function field associated to the elliptic curve $\mathcal C$ with plane model $y^2+y=x^3+1$, of genus one. The projective coordinates $(x:y:z)$ of $\mathbb F_4$-rational points of this elliptic curve are:
\begin{gather*}
 P_\infty =(0:1:0), P_1=(0:\omega:1), P_2=(0:\omega^2:1), P_3=(1:0:1),\\
P_4=(1:1:1),P_5=(\omega:0:1), P_6=(\omega:1:1),P_7=(\omega^2:0:1),P_8=(\omega^2:1:1).
\end{gather*}

Now, we represent $\mathbb F_{256}$ as $\F_4[x]/\mathcal Q(x)$ with primitive root $\alpha$, where $\mathcal Q(x)= x^4+x^3+\omega x^2+\omega x+\omega$.

\begin{itemize}
  \item For the place $Q$ of degree 4, the authors considered $Q=\sum_{i=1}^{4}\mathfrak{p}_i$ where $\mathfrak{p}_1$ corresponds
  to the $\mathbb F_{4^4}$-rational point with projective coordinates $(\alpha^{16}: \alpha^{174}:1)$ and $\mathfrak{p}_2, \mathfrak{p}_3, \mathfrak{p}_4$
  are its conjugates under the Frobenius map. We see that $\alpha^{16}$ is a root of the irreducible polynomial $\mathcal Q(x)= x^4+x^3+\omega x^2+\omega x+\omega$.
  Thus,  the place $Q$ is a place lying over the place $(\mathcal Q(x))$ of $\mathbb F_4(x)/\mathbb F_4$. Note also that the place
  $((\mathcal Q(x))$ of $\mathbb F_4(x)/\mathbb F_4$ is totally splitted in the algebraic function field $F/\mathbb F_4$, which means that there exist two places of
  degree $n$ in $F/\mathbb F_4$ lying over the place $(\mathcal Q(x))$ of $\mathbb F_4(x)/\mathbb F_4$, since the function field $F/\mathbb F_q$
  is an extension of degree $2$ of the rational function field $\mathbb F_4(x)/\mathbb F_q$.
  The place $Q$ is one of the two places in $F/\mathbb F_4$ lying over the place $(\mathcal Q(x))$. Notice that the second place is given by the orbit of the conjugated point $(\alpha^{16}: \alpha^{174}+1: 1)$. Therefore, we can represent $\mathbb F_{256}=\mathbb F_{4^4}=\F_4[x]/\mathcal Q(x)$ as the residue class field $F_Q$ of the place $Q$ in $F/\F_4$.

  \item For the divisor $D$, we choose the place described as $\sum_{i=1}^{4}\mathfrak {d}_i$ where $\mathfrak {d}_1$ corresponds to the $\mathbb F_{4^4}$-rational point $(\alpha^{17}: \alpha^{14}:1)$ and $\mathfrak{d}_2, \mathfrak{d}_3, \mathfrak{d}_4$ are its conjugates under the Frobenius map. By computation we see that $\alpha^{17}$ is a root of irreducible polynomial $\mathcal D(x)=x^2+x+\omega$ and $\deg D=4$ because  $\mathfrak {d}_1$, $\mathfrak{d}_2, \mathfrak{d}_3, \mathfrak{d}_4$ are all distinct. Therefore, $D$ is the only place in $F/\mathbb F_4$ lying over the place $(\mathcal D(x))$ of $\mathbb F_4(x)$ since the residue class field $F_D$ of the place $D$ is a quadratic extension of the residue class field $F_{\mathcal D}$ of the place $\mathcal D$,  which is an inert place of $\mathbb F_4(x)$ in $F/\F_4$.
\end{itemize}
The matrix $T_{2D}$ obtained in the basis of Riemann-Roch space $L(2D)$: \\$ {\mathcal B}_{2D}= \{f_1=1/f,f_2=x/f,f_3=y/f,f_4=x^2/f, f_5=1/f^2,f_6=xy/f^2,f_7=y/f^2,f_8=x/f^2\},$ with $f=x^2+x+\omega$ is the following:
$$T_{2D}=\left(
    \begin{array}{cccccccc}
      0 & 0 & 0 & 1 & 0 & 0 & 0 & 0 \\
      \omega^2 & 0 & 1 & 0 & \omega & 0 & \omega^2 & 0 \\
      \omega^2 & 0 & \omega & 0 & \omega & 0 & 1 & 0 \\
      \omega^2 & \omega^2 & 0 & \omega^2 & \omega & 0 & 0 & \omega \\
      \omega^2& \omega^2 & \omega^2 & \omega^2 & \omega & \omega & \omega & \omega \\
      \omega & \omega^2 & 0 & 1 & \omega^2 & 0 & 0 & 1 \\
      \omega & \omega^2 & \omega & 1 & \omega^2 & 1 & \omega^2 & 1 \\
      \omega & 1 & 0 & \omega^2 & \omega^2 & 0 & 0 & \omega\\
    \end{array}
  \right).$$
Then, computation gives:
$$\hspace{0.2cm} C=\left(
    \begin{array}{cccccccc}
      1 & 0 & 0 & 0 & \omega & 0 & \omega^2 & \omega \\
      0& 1 & 0 & 0 & 0 & \omega^2 & \omega & 0 \\
      0& 0 & 1& 0  & 1 & 0 & 0 & 1  \\
      0& 0 & 0 & 1 & 1 & \omega & 0  & \omega\\
    \end{array}
  \right)$$
and
$$ CT_{2D}^{-1}=
\left(\begin{array}{*{20}{c}}
1&\omega&1&\omega & 1 & 1 & \omega & 0\\
1&0&\omega^2&\omega & 1 & \omega^2 & 1 & \omega \\
1&\omega&\omega&\omega^2 & 1 & \omega^2  & \omega &  \omega \\
0&\omega &\omega^2&\omega & 1 & \omega^2 & 0 & 0 \\
\end{array}\right).$$
Consequently,  we obtain:
$$N_{z}(T_{D}) = 10, \;\; N_{z}(CT_{2D}^{-1}) =5.$$ and $$N_{1}(T_{D}) = 5, \;\; N_{1}(CT_{2D}^{-1}) =10.$$
Thus, we have the following quantities: $\mu_{s,0}(\mathcal{U}^{F,n}_{D,Q,\mathcal{P}})=71$ by Formula \eqref{ARz}, 
$\mu_{s,1}(\mathcal{U}^{F,n}_{D,Q,\mathcal{P}})=76$ by Formula \eqref{AR1} and 
finally $\mu_{s}(\mathcal{U}^{F,n}_{D,Q,\mathcal{P}})=51$ by Formula \eqref{scalar}.

\subsubsection{New designs of the Baum-Shokrollahi Construction (BSC)}

In this section, we follow the approach described previously
and we improve the Chudnovsky$^2$ multiplication algorithm in $\mathbb F_{4^4}$ constructed 
by Baum and Shokrollahi in \cite{bash}. By using the same elliptic curve and the same set $\{D,Q,\mathcal{P}\}$ 
(up to a permutation of the set ${\mathcal P}$ since it has no influence on scalar resp. bilinear complexity 
by Section \ref{strategiecomplete}), we obtain an algorithm with the same bilinear complexity and lower scalar complexity. 
The new construction of CCMA for the multiplication in $\mathbb F_{256}/\mathbb F_4$ is based 
upon complexity analysis in Section \ref{anacomp} and the strategies highlighted in Section \ref{DQfixed}.

\vspace{1em}

{\bf a) Optimization with Algorithm \ref{setup2}}

\vspace{1em}

By using  Algorithm \ref{setup1} (taking into account uniquely 
the optimization of the number of zeros) applied on the same set 
$\{F/\F_q, D,Q,{\mathcal P}\}$ used in Section \ref{BSExperiment} (up to a permutation of the set ${\mathcal P}$), we obtain the following basis
$${\mathcal B}^{opt}_{2D}={\mathcal B}_{D,max}\cup {\mathcal B}^c_D$$
of $\mathcal L(2D)$, where ${\mathcal B}_{D,max}=\{f_1,f_2,f_3,f_4\}$ and ${\mathcal B}^c_D=\{f_5,f_6,f_7,f_8\}$ with:

\begin{align*}
f_1&=(\omega x^2 + x)/(x^2 + x + \omega),\\
f_2&= (\omega^2x^2 + \omega^2 x + \omega^2)/(x^2 + x + \omega),\\
f_3&=  \omega^2y/(x^2 + x + \omega)+ (\omega^2 x + 1)/(x^2 + x + \omega),\\
f_4&=  \omega^2y/(x^2 + x + \omega) + (\omega^2 x + \omega)/(x^2 + x + \omega),\\
f_5&= (x^2 + x)y/(x^4 + x^2 + \omega^2) + (x^4 + \omega x^3 + \omega x^2 + \omega x)/(x^4 + x^2 + \omega^2),\\
f_6&=  \omega^2 xy/(x^4 + x^2 + \omega^2) + (\omega x^4 + x^2 + \omega x + 1)/(x^4 + x^2 + \omega^2),\\
f_7&=  (\omega^2 x + 1)y/(x^4 + x^2 + \omega^2) + (\omega^2 x^4 + \omega^2 x^3 + \omega x^2 + \omega)/(x^4 + x^2 +
    \omega^2),\\
f_8&=  (x^2 + \omega x + 1)y/(x^4 + x^2 + \omega^2)  + (x^4 + \omega x^3 + x^2 + \omega^2 x + \omega^2)/(x^4 +
    x^2 + \omega^2).
\end{align*}

In this basis, we obtained the matrice $T_{2D}$ of the second evaluation map $Ev_{\mathcal{P}}$, 
where $\mathcal P:=\{P_{\infty}, P_1, P_2,P_7, P_8,P_3,P_4,P_5\}$ is the ordered set of rational 
places used in CCMA:

$$T_{2D}=\left(
    \begin{array}{cccccccc}
    \omega &\omega^2 &0&   0&   1&   \omega& \omega^2 & 1\\
   0 & \omega &0  & \omega & 0&  \omega&   0&   \omega \\
 0 &  \omega &\omega & 0 & 0 &  \omega &\omega & 0 \\
1&0&0&1&1&1&\omega^2 &\omega^2\\
1&0&1&0&\omega &\omega & \omega^2 &0\\
0 &0&1&0& \omega &\omega &0&1\\
 0&0&0&1&1&\omega^2 &\omega &0\\
\omega   &\omega   &1   &\omega^2   &1&0&0&\omega^2 \\
    \end{array}
  \right)$$ and
  $$T^{-1}_{2D,4}=\left(
    \begin{array}{cccccccc}
    0 &\omega &1&0&0&1&1&\omega^2\\
  0  & 0&0&0  & 1& \omega &\omega & \omega^2\\
 \omega^2 &\omega &\omega^2 &\omega^2&   \omega& \omega & 0&0\\
1 &\omega^2   &\omega &\omega^2 & 0   &0   &1&\omega^2  \\
 \end{array}
  \right).$$

Therefore, $N_{z}(T_{D})= 16$ and $N_{z}(T^{-1}_{2D,4})=11$. 
Note that without taking into account the optimization criterium mentioned in Remark \ref{optimizationcriteriumNzthenN1}, 
we have: $N_{1}(T_{D})= 7$ and $N_{1}(T^{-1}_{2D,4})=6$. So, we obtain $\mu_{s,0}(\mathcal{U}^{F,n}_{D,Q,\mathcal{P}})=53$
(a gain of $25\%$ with respect to BSC). 
Finally, if we compute the other quantities, we obtain $\mu_{s,1}(\mathcal{U}^{F,n}_{D,Q,\mathcal{P}})=76$ 
(equality with BSC) and  $\mu_{s}(\mathcal{U}^{F,n}_{D,Q,\mathcal{P}})=33$ 
(a gain of 54,5\% with respect to BSC).

\vspace{1em}

{\bf b) Optimization with Algorithm \ref{setup3}}

\vspace{1em}

By using  Algorithm \ref{setup3} (taking into account uniquely 
the optimization of the number of zeros) applied on the same set 
$\{F/\F_q, D,Q,{\mathcal P}\}$ used in Section \ref{BSExperiment} (up to a permutation of the set ${\mathcal P}$), we obtain the following basis
$${\mathcal B}^{opt}_{2D}={\mathcal B}_{D,max}\cup {\mathcal B}^c_D$$
of $\mathcal L(2D)$, where ${\mathcal B}_{D,max}=\{f_1,f_2,f_3,f_4\}$ and ${\mathcal B}^c_D=\{f_5,f_6,f_7,f_8\}$ with:

\begin{align*}
f_1&=(y + \omega x+\omega^2)/(x^2 + x + \omega),\\
f_2&= (y + \omega^2 x + \omega)/(x^2 + x + \omega, \\
f_3&= (\omega x^2 + \omega^2 x)/(x^2 + x + \omega),\\
f_4&= (\omega y)/(x^2 + x + \omega),\\
f_5&= (\omega x^2 + \omega x)y+\omega^2 x^4 + \omega x^3 +  x^2 +  x+ \omega)/(x^4 + x^2 + \omega^2),\\
f_6&=  (\omega^2 x^2y+\omega x^4 + \omega x^3+ x^2 + \omega x )/(x^4 + x^2 + \omega^2),\\
f_7&=  (x^2+ \omega^2 x)y + \omega x^4 + \omega x^2 )/x^4 + x^2 +
    \omega^2),\\
f_8&=  (\omega x + \omega)y + \omega x^4)/(x^4 +
    x^2 + \omega^2).
\end{align*}

In this basis, we obtained the matrice $T_{2D}$ of the second evaluation map $Ev_{\mathcal{P}}$, where 
$\mathcal P:=\{P_{\infty}, P_1,P_2,P_7,P_8,P_3,$ $P_4,P_5\}$ is the ordered set of rational places used in CCMA:

$$T_{2D}=\left(
    \begin{array}{cccccccc}
0 & 0 &  \omega &  0 &\omega^2 &  \omega &  \omega &  \omega\\
\omega^2 &  0 & 0 &  \omega &\omega^2 &  0 &  0 &  1\\
0& \omega^2 & 0 &\omega^2 &\omega^2 &0 &  0 &  \omega\\
\omega^2& \omega^2 &\omega^2 &0 &  1 &  1 &  0 &\omega^2\\
0&0 &\omega^2 &  1 &  1 &  0 &\omega^2 &\omega^2\\
0 &  1 & 0 & 0 &  0 &\omega^2 &  1 &  \omega\\
 \omega &\omega^2 & 0 &\omega^2 &  1  & \omega &  0 &  1\\
\omega^2 & 0 &  \omega  & 0 & \omega &  0 &  1 &\omega^2\\
   \end{array}
\right )$$
and
  $$T^{-1}_{2D,4}=\left(
    \begin{array}{cccccccc}
     1 & 0 & 0 &\omega^2 &  \omega & 0& \omega^2 &\omega^2\\
 1 &  1 &\omega^2 &0 & 0 &\omega^2 & 0 &  1\\
0 &  1 &  \omega &  1 &\omega^2 &  \omega & 0 & 0\\
\omega^2 &  \omega &\omega^2 & 0 & \omega & 0 &\omega^2 & 0
 \end{array}
  \right).$$

Therefore, $N_{z}(T_{D})= 16$ and $N_{z}(T^{-1}_{2D,4})=12$. 
Note that without taking into account the optimization criterium mentioned in Remark \ref{optimizationcriteriumNzthenN1}, 
we have: $N_{1}(T_{D})= 2$ and $N_{1}(T^{-1}_{2D,4})=6$. 
So, we obtain $\mu_{s,0}(\mathcal{U}^{F,n}_{D,Q,\mathcal{P}})=52$ (a gain of $27\%$ over BSC). 
Note also that we improve the result obtained in \cite{baboda} ($+2\%$). 
Finally, if we compute the other quantities, we obtain $\mu_{s,1}(\mathcal{U}^{F,n}_{D,Q,\mathcal{P}})=86$ 
( a loss of 13\% with respect to BSC) and  $\mu_{s}(\mathcal{U}^{F,n}_{D,Q,\mathcal{P}})=42$ 
(a gain of 21,5\% with respect to BSC).

\begin{remark}

Regarding the total scalar complexity, we notice that a worse result is obtained using Algorithm \ref{setup3} 
than using Algorithm \ref{setup2}. However, this is not significant because we did not take into account 
the optimization criterion for the number of 1, wishing to focus on the optimization of the number of zeros. 
It is therefore likely to obtain even better constructions, by using the criteria mentioned in Remark \ref{Remark2Setup2}.

\end{remark}

\end{document}